\numberwithin{equation}{section}
\numberwithin{figure}{section}
\newenvironment{proof}{{\noindent\it \bf Proof}\quad}{\hfill $\square$\par}
\newtheorem{theorem}{Theorem}[section]
\newtheorem{lemma}{Lemma}[section]
\newtheorem{remark}{Remark}[section]
\newtheorem*{acknowledgement}{Acknowledgement}
\begin{document}
	
	\rmfamily 
	
	\title{Symmetric Stationary Boundary Layer}
	\author{ Chen Gao, Liqun Zhang and Chuankai Zhao}
	\date{}
	
	\maketitle
	
	\begin{abstract}
		Considering the boundary layer problem in the case of two-dimensional flow past a wedge with the wedge angle $\varphi=\pi\frac{2m}{m+1}$, Oleinik and Samokhin\cite{OS} obtained the local well-posedness results for $m \geq 1$. In this paper, we establish the existence and uniqueness of classical solutions to the Prandtl systems for arbitrary $m>0$, which solves the steady case in Open problem 6 proposed by Oleinik and Samokhin \cite{OS}. Our proof is based on the maximum principle technique at the Crocco coordinates and the most important observation that when the fluid approaches a sharp point, it seems the self-similar solutions. Then we obtain the existence and uniqueness of the solution with the help of the self-similar solutions by the Line Method. Furthermore, we similarly establish the well-posedness results of three-dimensional flow past a cone.
		~\\
		
		\noindent{\textbf{Keywords:} boundary layer, stagnation point, self-similar solution,  asymptotic behavior.}
	\end{abstract}
	
	\section{Introduction}
	
	It is well-known that the Prandtl system is obtained as a simplification of the Navier-Stokes system and describes the motion of a fluid with small viscosity about a solid body in boundary layer. In this paper, we consider the following Prandtl system for the plane stationary symmetric incompressible flow:
	\begin{equation}\label{ps}
		\begin{gathered}
			u\frac{\partial u}{\partial x}+v\frac{\partial u}{\partial y}=U\frac{dU}{dx}+\nu\frac{\partial^2 u}{\partial y^2},\\
			\frac{\partial u}{\partial x}+\frac{\partial v}{\partial y}=0
		\end{gathered}
	\end{equation}
    in the domain $D=\{0<x<X,0<y<\infty\}$ with the boundary conditions 
    \begin{equation}\label{bd}
    	\begin{gathered}
    		u(0,y)=0,u(x,0)=0,v(x,0)=v_0(x),\\
    		u(x,y)\rightarrow U(x) \quad \text{as} \quad y\rightarrow \infty.
    	\end{gathered}
    \end{equation}

    Where $\nu>0$ is the coefficient of kinematic viscosity, the fluid density $\rho$ is equal to 1, and the function $U(x)$ is a given longitudinal velocity component of the outer flow: $U(0)=0$, $U(x)>0$ for $x>0$.
	
	For instance, when parallel flow flows through a wedge with the wedge angle $0< \varphi<2\pi$, the system described above appears near the edge of the wedge. The flow past a wedge can be regarded as a coupling of two angular flows, and its external flow corresponds to the Euler flow of the angular flow. Then the flow velocity varies according to the law $U(x)\sim Cx^m$ for $\varphi =\pi \frac{2m}{m+1}$ where $C,m=\text{const}>0$ (cf.Figure \ref{fig:wedges}). Perhaps, due to the curvature of the surface, lower-order terms appear in the asymptotics. 

	\begin{figure}[h]
		\centering
		\includegraphics[scale=0.5]{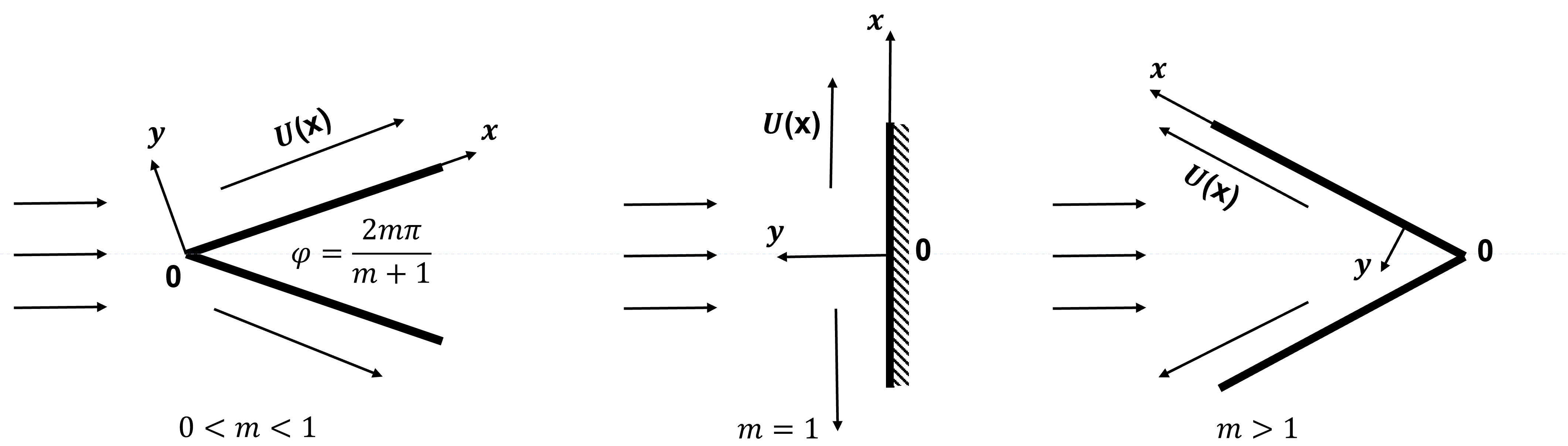}
		\caption{Flow parallel to the symmetry axis of a wedge }
		\label{fig:wedges}
	\end{figure}
	
	There is a lot of literature on theoretical, numerical, and experimental studies on Prandtl’s system, see \cite{OS}\cite{S} and the references therein. In particular, Oleinik and Samokhin \cite{OS} gave a systematic exposition of the main rigorous mathematical results. The main results of Oleinik and Samokhin can be summarized as that, when $m\geq 1$, there exists a unique classical solution to the problem (\ref{ps}), (\ref{bd}) for $x<X$, where $X$ is small, in the class of functions having a certain behavior at infinity with respect to $y$, namely, $U(x)-u(x,y)\sim e^{-\alpha y^2}$ as $y\rightarrow \infty$, where $\alpha=\text{const}>0$. 
	
	It is necessary to mention the well-posedness results of the plate boundary layer equations. For the monotonic data, Oleinik and Samokhin \cite{OS} obtained the local existence of classical solutions of 2D Prandtl equations basing on the Crocco transformation. And Wang and Zhang \cite{WZ1} proved the global-in-$x$ $C^\infty$ regularity up to the boundary $y = 0$ using the maximum principle technique. For non-stationary case, Xin and Zhang \cite{XZ} proved the global existence of weak solutions for the favorable pressure. And then Xin, Zhang and Zhao \cite{XZZ} proved the global existence of smooth solutions in the case of favorable pressure gradient.
	
	There are some results for the asymptotic behavior of the steady Prandtl equation as $x\rightarrow \infty$. Serrin \cite{Se} proved that Oleinik’s solution $u$ converges to the famous Blasius solution $\bar{u}$ in $L_y^\infty$ sense as $x\rightarrow \infty$. Iyer \cite{I} proved the explicit decay estimates of $u-\bar{u}$ and its derivatives when the initial data is a small localized perturbation of the Blasius profile. Wang and Zhang \cite{WZ2} proved that the explicit decay estimate of $u-\bar{u}$ in $L_y^\infty$ sense for general initial data with exponential decay, and also proved that the decay estimates of its derivatives when the data has an additional concave assumption.
	
	Boundary layer separation is also an important concern, and some results have been given by predecessors. The initial work belongs van Dommelen and Shen \cite{DS}. Recently in the case of adverse pressure gradient, Dalibard and Masmoudi \cite{DM} as well as Shen, Wang, and Zhang \cite{SWZ} justified the physical phenomenon of boundary layer separation and studied the local behavior of the solution near the separation point. The relevant results about the unsteady Prandtl equation can be found in \cite{KVW}\cite{WE}.
	
	The main purpose of this paper is to establish the existence and uniqueness of solutions to the problem (\ref{ps}), (\ref{bd}) for arbitrary $m>0$. This generalizes the local well-posedness results for $m\geq 1$ due to Oleinik\cite{OS}. Furthermore, we similarly obtain the well-posedness results to three-dimensional flow past a cone with the cone angle. In the last section, we will describe the work on the three-dimensional symmetry case in more detail.
	
	Our main result in this paper can be stated as 
	\begin{theorem}\label{prandtl}
		Assume that
		\begin{equation}\label{assum}
			\begin{gathered}
				U(x)=x^mV(x),\quad v_0(x)=x^{\frac{m-1}{2}}v_1(x),\\
				V(x)=a+a_1(x),\quad V(0)=a>0,\quad v_1(0)=0,\\
				|a_1(x)|\leq N_1x,\quad |v_1(x)|\leq N_2x.
			\end{gathered}
		\end{equation}
	    where $N_1,N_2=\text{const}>0$. Then the initial boundary value problem (\ref{ps}),(\ref{bd}) in the domain $D=\{0<x<X,0<y<\infty\}$, for some $X$ depending on $U$ and $v_0$, has a unique classical solution $u(x,y), v(x,y)$ with the following properties: $u_y>0$ for $y \geq 0, x>0$ ; $u/U, u_y /(x^{(m-1) / 2} U(x))$ are bounded and continuous in $\bar{D}$ ; $u>0$ for $y>0$ and $x>0 $; $ u(x, y) \rightarrow U(x)$, $u_y \rightarrow 0$ as  $y \rightarrow \infty$; moreover, the solution $u(x,y)$ has the following property as $x\rightarrow 0$:
	    \begin{equation}
	    	u(x,y)\sim x^m f^{'}\left(yx^{\frac{m-1}{2}}\right),
	    \end{equation}
        where $f$ will be mentioned in the following section; and the solution $u(x,y)$ has the following inequalities as $y\rightarrow \infty$:
		\begin{equation}
		    M_{1} \exp (-M_{2} x^{m-1} y^2) \leq 1-\frac{u}{U} \leq M_{3} \exp (-M_{4} x^{m-1} y^2),
	    \end{equation}
        where $M_i=\text{const}>0,i=1,2,3,4$.
	\end{theorem}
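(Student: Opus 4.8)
The plan is to pass to the Crocco coordinates $(x,\eta)$ with $\eta=u/U$ and unknown $w=u_y$, which straightens the matching surface $u=U$ into the fixed line $\eta=1$ and, after eliminating $v$ by the continuity equation, reduces \eqref{ps} to the single quasilinear equation
\begin{equation}
\nu\,w^2 w_{\eta\eta}=U^3\eta\,w_x+U^2U'(1-\eta^2)\,w_\eta
\end{equation}
on the strip $0<x<X$, $0<\eta<1$, supplemented by the no-slip relation $\nu\,w w_\eta=Uv_0\,w-U^2U'$ at $\eta=0$ and the matching condition $w=0$ at $\eta=1$. This equation is degenerate: the diffusion coefficient $w^2$ vanishes at $\eta=1$, and the coefficients on the right become singular as $x\to0$ since $U(0)=0$. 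The entire strategy is to tame both degeneracies by comparison with the exact self-similar solution, which in these coordinates is an explicit steady barrier.

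First I would construct the Falkner--Skan profile associated with $\beta=2m/(m+1)\in(0,2)$: the solution of $f'''+ff''+\beta(1-(f')^2)=0$ with $f(0)=f'(0)=0$ and $f'(\infty)=1$ satisfying $f'>0$, $f''>0$, with $1-f'$ and $f''$ decaying like $e^{-c\zeta^2}$. Written in Crocco variables this yields an explicit function $W_s(x,\eta)=x^{(3m-1)/2}g(\eta)$ solving the leading-order ($a_1\equiv v_1\equiv0$) equation exactly, with $g>0$ on $[0,1)$, $g(1)=0$ and $g(\eta)\sim(1-\eta)\sqrt{-\log(1-\eta)}$ near $\eta=1$; note that the pure self-similar flow has $v_0\equiv0$, consistent with $v_1(0)=0$. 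Since the perturbations $a_1,v_1$ in \eqref{assum} are $O(x)$, suitably rescaled multiples $(1\pm Cx^\theta)W_s$, possibly with a shift in $\eta$, should serve as ordered sub- and super-solutions of the full equation for small $x$. Constructing these comparison functions and verifying the differential inequalities, including the oblique boundary relation at $\eta=0$, is the technical heart of the argument.

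With the barriers in hand I would build the solution by the Line Method: partition $0=x_0<x_1<\dots<x_n=X$, replace $w_x$ at $x_k$ by the backward difference $(w_k-w_{k-1})/h$, and at each level solve the resulting second-order ODE boundary value problem for $w_k(\eta)$ on $(0,1)$. Solvability of each nonlinear problem follows from a fixed-point argument once the a priori bounds $c_-W_s\le w_k\le c_+W_s$ are available; these bounds, the positivity $w_k>0$ on $[0,1)$, and uniform estimates for $w_{k,\eta}$ and $w_{k,\eta\eta}$ all come from the maximum principle applied with the self-similar sub- and super-solutions, and they are inherited at every level because the backward scheme is monotone. Letting $h\to0$ and using the compactness furnished by these uniform estimates, the interpolants converge to a solution $w$ of the Crocco equation on the whole strip that still satisfies $c_-W_s\le w\le c_+W_s$.

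Finally I would invert the transformation: from $w=u_y$ and $y_\eta=U/w$ one recovers $y(x,\eta)=U(x)\int_0^\eta w(x,\eta')^{-1}\,d\eta'$, whose monotonicity defines $u(x,y)$ by inversion and then $v$ from continuity; the squeeze $c_-W_s\le w\le c_+W_s$ yields the continuity and boundedness of $u/U$ and $u_y/(x^{(m-1)/2}U)$ up to $\bar D$, the positivity of $u_y$, and the limits as $y\to\infty$. Reading the same squeeze back near $\eta=1$ gives the two-sided bound for $1-u/U$, since $g(\eta)\sim(1-\eta)\sqrt{-\log(1-\eta)}$ forces $1-\eta\sim e^{-c\zeta^2}$ with $\zeta^2\sim x^{m-1}y^2$, and near $x=0$ it gives the self-similar asymptotics $u\sim x^mf'(yx^{(m-1)/2})$. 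Uniqueness follows by writing two solutions in Crocco coordinates and applying the maximum principle to their difference, once more using $W_s$ to dominate the degeneracies at $\eta=1$ and $x=0$. I expect the genuinely hard part to be the regime $0<m<1$: there $x^{(m-1)/2}\to\infty$ and, for $m<1/2$, the pressure term $UU'\sim x^{2m-1}$ blows up as $x\to0$, so the a priori estimates near the stagnation line are not uniform unless one compares directly against the exact self-similar profile. Making the sub/super-solution construction and the maximum-principle estimates close uniformly up to $x=0$ in this range is the main obstacle.
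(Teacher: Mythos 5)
Your proposal is correct and follows essentially the same route as the paper: Crocco variables, the Falkner--Skan self-similar profile as a two-sided barrier (your $W_s=x^{(3m-1)/2}g(\eta)$ is exactly the paper's normalized unknown $\omega=u_y/(x^{(m-1)/2}U)$ compared against $Y(\eta)(1\pm M\xi)$), the line method with maximum-principle a priori bounds and a fixed-point (Leray--Schauder) solvability argument at each line, then inversion of the transform for existence, asymptotics, and uniqueness. The only substantive differences are cosmetic: the paper factors out $x^{(m-1)/2}U$ so that the $x\to 0$ degeneration becomes a fixed ODE problem for $Y(\eta)$, and the genuine obstruction it isolates is the sign change of the zeroth-order coefficient at $m=1/3$ rather than your thresholds $m<1/2$ or $m<1$.
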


    \begin{remark}
    	It is easy to verify that the results in this paper can be generalized to the following situations:
    	\begin{equation}
    		\begin{gathered}
    			v_0(x)=x^{\frac{m-1}{2}}v_1(x),\quad v_1(x)=b+b_1(x),\\
    			b\leq 0,\quad |b_1(x)|\leq N_3x.
    		\end{gathered}
    	\end{equation}
        
        It should be noted that when $m\geq 1$, $v_0(x)\rightarrow 0$ as $x\rightarrow 0$ and $v_0=0$ at the stagnation point. When $0<m<1$ and $b<0$, $v_0(x) \rightarrow -\infty$ as $x\rightarrow 0$, which is physically impossible.
    \end{remark}

    \begin{remark}
    	From the theorem \ref{prandtl}, we note that a point $x_0$, $0<x_0<X$ can be found as the initial location of the continuation problem for boundary layer, where $u(x_0,y)>0$ and $U(x_0)>0$. With the help of the results of the continuation problem for boundary layer mentioned earlier, we can naturally obtain the existence of the Prandtl system (\ref{ps}), (\ref{bd}) for any $X>0$ in the case of favorable condition either $U_x\geq 0$ and $v_0(x)\leq 0$ or $U_x>0$. Furthermore, through the previously mentioned results, we can prove the global-in-$x$ $C^\infty$ regularity up to the boundary $y = 0$ and obtain the asymptotic behavior of the Prandtl equation when $x\rightarrow \infty$.
    \end{remark}
	
	We now comment on the proof of Theorem \ref{prandtl}. 
	
	First, we use a generalization of the Crocco transformation:
	\begin{equation}\label{crocco}
		\xi=x,\quad \eta=\frac{u(x,y)}{U(x)},
	\end{equation}
	we obtain for $\omega(\xi ,\eta)=\frac{u_y(x,y)}{x^{\frac{m-1}{2}}U(x)}$ the following equation:
	\begin{equation}\label{qy}
		L(\omega):=\nu \omega^2\omega_{\eta\eta}-\eta A\omega_{\xi}+(\eta^2-1)B\omega_{\eta}-\eta C\omega=0,
	\end{equation}
	in the domain $\varOmega=\{0<\xi<X,0<\eta<1\}$, with the boundary conditions
	\begin{equation}\label{bd3}
		\omega|_{\eta=1}=0,\quad (\nu \omega\omega_{\eta}-v_1\omega +B)|_{\eta=0}=0,
	\end{equation}
	where 
	\begin{equation}
		A=\xi V(\xi),\quad B=mV(\xi)+\xi V_{\xi}(\xi),\quad C=\frac{3m-1}{2}V(\xi)+\xi V_{\xi}(\xi).
	\end{equation}

    Note that the equation (\ref{qy}) is a degenerate parabolic equation. When $\xi\rightarrow 0$, let $Y(\eta)=\omega(0,\eta)$, it degenerates to the following elliptic equation:
    \begin{equation}\label{begin}
    	L(Y):=\nu Y^2Y_{\eta\eta}+(\eta^2-1)ma Y_{\eta}-\eta \frac{3m-1}{2}aY=0,\quad 0<\eta <1,
    \end{equation} 
    and the boundary conditions become
    \begin{equation}\label{bd4}
    	Y(1)=0,\quad (\nu YY_{\eta}+ma)|_{\eta=0}=0.
    \end{equation}
    
    Second, the key point is to establish the existence and prior estimations of the solution to the problem (\ref{begin}), (\ref{bd4}). Oleinik and Samokhin solve the above difficulties through the maximum principle of the elliptic equation for $m\geq 1$. This method cannot extend to the problem for $m<\frac{1}{3}$. Our basic idea is to establish equivalence between a solution to the problem (\ref{begin}), (\ref{bd4}) and the self-similar solution, and to use the properties of self-similar solutions to obtain the properties of this solution.
    
    Next, we note the solution $Y(\eta)$ to the equation (\ref{begin}) is an approximation of the solution $\omega(\xi,\eta)$ to the Prandtl equation (\ref{ps}) when $\xi\rightarrow 0$. And we establish the existence of solutions for problem (\ref{qy}), (\ref{bd3}) by the line method: replacing $\omega_{\xi}$ with the difference quotient, obtain a series of ordinary differential equations, prove the existence theorem and, passing to the limit, establish the existence of a solution to the original equation. Furthermore, we obtain prior estimates of $\omega(\xi,\eta)$ based on the properties of $Y(\eta)$.
    
    Finally, we use the inverse change of variables (\ref{crocco}) to pass from the solution $\omega(\xi,\eta)$ of problem (\ref{qy}), (\ref{bd3}) to the solutions of problem (\ref{ps}), (\ref{bd}). And then we obtain Theorem \ref{prandtl} with the help of the properties of the solution $\omega(\xi,\eta)$.

    We finish this introduction by outlining the rest of the paper. First, in Section II, we establish the existence and uniqueness of the solution to the problem (\ref{begin}), (\ref{bd4}) with the help of the self-similar solution. Based on the results in section II, we use the line method to obtain the existence and prior estimates of the solutions for problem (\ref{qy}), (\ref{bd3}) and then establish the local well-posedness of solutions for the Prandtl system (\ref{ps}), (\ref{bd}) in Section III. Finally, we similarly obtain the well-posedness results to three-dimensional flow around a cone similar to the two-dimensional symmetry case in Section IV.

	\section{Self-similar solutions}
	
	In the analysis of differential equations describing actual physical processes, it is often useful to examine some special solutions, which allow us to make conclusions about the main properties of the phenomena in equation. The simplest case of special solutions is that of a problem with the so-called property of self-similarity. A characteristic feature of self-similar problems of the boundary layer theory is that velocity component profiles for the flow in this case form a family of homothetic curves.
	
	Assume $U(x)$ and $v_0(x)$ satisfy the condition (\ref{assum}), consider $u(x,y)$ has the form
	\begin{equation}
		u(x,y)=x^mf^{'}_z(z,x),
	\end{equation}
	where $z=yx^{\frac{m-1}{2}}$. And we can obtain the following differential equation:
	\begin{equation}\label{omega}
		\nu f^{'''}_z+\left(\frac{m+1}{2}V+xV_x\right)ff^{''}_Z+(mV+xV_x)(1-f^{'2}_z)-xV \left(f^{'}_z\frac{\partial f^{'}_z}{\partial x}-\frac{\partial (f-f(0,x))}{\partial x}f^{''}_z\right)=0
	\end{equation}
	where $f_0\equiv f(0,x)=-\frac{v_1}{\frac{1+m}{2}V+xV_x}$, with the boundary conditions
	\begin{equation}
		f(0,x)=f_0,\quad f^{'}_z(0,x)=0,\quad f^{'}_z(z,x)\rightarrow 1,\quad \text{as}\quad z\rightarrow\infty.
	\end{equation}
	
	Introduce new variable $\xi=x, \eta=f^{'}(z)$ in (\ref{omega}), we obtain the following equation for $\omega(\xi,\eta)=f^{''}(z)$:
	\begin{equation}
		\nu \omega^2\omega_{\eta\eta}-\eta \xi V\omega_{\xi}+(\eta^2-1)(mV+\xi V_\xi)\omega_{\eta}-\eta \left(\frac{3m-1}{2}V+\xi V_\xi\right)\omega=0,
	\end{equation}
	with the boundary conditions
	\begin{equation}
		\omega|_{\eta=1}=0,\quad \left.\left(\nu \omega\omega_{\eta}-v_1\omega +(mV+\xi V_\xi)\right)\right|_{\eta=0}=0,
	\end{equation}
	This is exactly the equation (\ref{qy}), (\ref{bd3}) mentioned earlier.
	
	When $x\rightarrow 0$, we can obtain $u(x,y)$ has the following self-similarity form:
	\begin{equation}
		u(x,y)=x^mf^{'}(z),
	\end{equation}
	where $z=yx^{\frac{m-1}{2}}$ and $f(z)$ satisfies the following equation:
	\begin{equation}\label{si}
		f^{'''}+\frac{m+1}{2}aff^{''}+ma(1-f^{'2})=0,
	\end{equation}
	with the boundary conditions
	\begin{equation}
		f(0)=0,\quad f^{'}(0)=0, \quad f^{'}(z)\rightarrow 1 \quad \text{as} \quad z\rightarrow\infty.
	\end{equation}
	
	Introduce new variable $\eta=f^{'}(z)$ in (\ref{si}), we obtain the following equation for $Y(\eta)=f^{''}(z)$:
	\begin{equation}
		\nu Y^2Y_{\eta\eta}+(\eta^2-1)ma Y_{\eta}-\eta \frac{3m-1}{2}aY=0,\quad 0<\eta <1,
	\end{equation} 
	and the boundary conditions become
	\begin{equation}
		Y(1)=0,\quad (\nu YY_{\eta}+ma)|_{\eta=0}=0.
	\end{equation}
	This is exactly the equation (\ref{begin}), (\ref{bd4}) mentioned earlier. It is easy to see that the solution $Y(\eta)$ is an approximation of the solution $\omega(\xi,\eta)$ when $\xi\rightarrow 0$, since $u(x,y)\sim x^mf^{'}(yx^{\frac{m-1}{2}})$ as $x\rightarrow 0$. 
	
	Consider the following Falkner-Skan equation\cite{FS}:
	\begin{equation}\label{fs}
		f^{'''}+ff^{''}+\beta(1-f^{'2})=0,
	\end{equation}
    with the boundary conditions
    \begin{equation}\label{bd2}
	    f(0)=f_0,\quad f^{'}(0)=0,\quad f^{'}(z)\rightarrow 1 \quad\text{as}\quad z\rightarrow \infty.
    \end{equation}
    
	The special case $\beta=0$ of equation (\ref{fs}), in which the wedge reduces to a flat plate, was solved numerically by Blasius (1908 \cite{B}). And the case $\beta=\frac{1}{2}$, which describes the three-dimensional flow of a viscous fluid against a plane wall or the planar flow of a viscous fluid against a wedge with $\varphi=\frac{\pi}{2}$, was solved by Homann(1936 \cite{Ho}). Weyl(1942 \cite{W}) gave rigorous proof of the solubility of the boundary-value problem (\ref{fs}) and (\ref{bd2}) for any non-negative constant $\beta$. He showed that the problem has a solution $f(z)$ whose first derivative increases with $z$ and whose second derivative decreases and tends to zero as $z \rightarrow \infty$. Coppel(1959 \cite{C}) simplified Weyl's approach to studying the equation from the standpoint of the theory of differential equations. Schlichting(1968 \cite{S}) and Hartman(1964 \cite{H1}, 1972 \cite{H2}) have investigated the boundary value problem (\ref{fs}), (\ref{bd2}) by using numerical and analytical methods. Wang, Gao, and Zhang (1998 \cite{G}) presented a new approach to studying the problem (\ref{fs}), (\ref{bd2}) and provided some new information about a normal solution to the problem.

	Next, following Hartman\cite{H1}, Ch.XIV, Part II, we list several results concerning the existence, uniqueness, and asymptotic behavior of solutions to certain boundary conditions for equation (\ref{fs}). 
	
	Consider equation (\ref{fs}) with the boundary conditions
	\begin{equation}\label{bd5}
		f(0)=f_0, \quad f^{'}(0)=f_1, \quad f^{'}(z) \rightarrow 1 \text { as } z \rightarrow \infty,
	\end{equation}
	where $f_0, f_1= \text{const}$. On solutions of problem (\ref{fs}), (\ref{bd5}) we impose the following additional restriction:
	\begin{equation}\label{add}
		0<f^{'}(z)<1 \quad \text{for} \quad 0\leq z<\infty.
	\end{equation}
	
	\begin{theorem}\label{exi}
		Let $\beta>0,-\infty<f_0<+\infty, 0 \leq f_1<1$. Then there exists one and only one solution of problem (\ref{fs}), (\ref{bd5}), (\ref{add}) such that
		\begin{equation}
			f^{''}(z)>0 \quad \text { for } \quad 0 \leq z<\infty.
		\end{equation}
	\end{theorem}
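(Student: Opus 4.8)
The plan is to treat the boundary value problem \eqref{fs}, \eqref{bd5}, \eqref{add} by the shooting method, regarding the value $f''(0)=\alpha$ as a parameter and selecting it so that the resulting trajectory satisfies the condition at infinity. For fixed $\beta>0$, $f_0\in\mathbb{R}$ and $f_1\in[0,1)$, and for each $\alpha>0$, standard ODE theory gives a unique local solution $f(z;\alpha)$ of \eqref{fs} with $f(0)=f_0$, $f'(0)=f_1$, $f''(0)=\alpha$; I would first record that on the interval where $0<f'<1$ the solution stays smooth and that the requirement $f''>0$ is equivalent to $f'$ being strictly increasing. The key structural observation is that wherever $f''=0$ while $0<f'<1$, equation \eqref{fs} forces $f'''=-\beta(1-f'^2)<0$, so $f''$ cannot return to positive values after vanishing. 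Consequently, for each $\alpha$ exactly one of three things happens: $f'$ reaches the value $1$ at some finite $z$ with $f''>0$ there (overshoot), or $f'$ attains an interior maximum strictly below $1$ and then decreases (undershoot), or $f'\to1$ monotonically with $f''>0$ for all $z$.

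I would then carry out the standard a priori estimates showing that the overshoot set $\{\alpha:\ \text{overshoot occurs}\}$ and the undershoot set are both open and nonempty: for $\alpha$ large the convexity forces $f'$ to cross $1$ in finite $z$, while for $\alpha$ small $f''$ is exhausted before $f'$ can reach $1$. Because $(0,\infty)$ is connected and the two open sets are disjoint, their complement is nonempty, and any threshold value $\alpha^\ast$ produces a trajectory of the third type. It then remains to verify that this borderline trajectory realizes the condition at infinity, i.e. $f'(z)\to1$ and $f''(z)\to0$; here the monotone bounded $f'$ together with the sign of $f''$ yields convergence of $f'$ to a limit $\ell\le1$, and one rules out $\ell<1$ using \eqref{fs} together with the integrability of $f''$ implied by the monotone, bounded behaviour of $f'$.

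For uniqueness I would establish monotone dependence on the shooting parameter: two trajectories issuing from $\alpha_1<\alpha_2$ stay ordered, $f(\cdot;\alpha_1)\le f(\cdot;\alpha_2)$ and correspondingly for the derivatives, as long as both remain in the admissible strip, so at most one $\alpha$ can yield the asymptotically correct solution. Equivalently, and more in the spirit of the present paper, I would pass to the Crocco variables $t=f'$, $w(t)=f''$, under which \eqref{fs} becomes the degenerate second order equation $w^2w''-\beta(1-t^2)w'+(1-2\beta)\,t\,w=0$ on $f_1<t<1$, supplemented by the nonlinear relation $w(f_1)w'(f_1)+f_0\,w(f_1)+\beta(1-f_1^2)=0$ that encodes $f(0)=f_0$ and by $w\to0$ as $t\to1^-$; two positive solutions with the same data are then shown to coincide by a maximum principle applied to their difference, which is exactly the comparison technique used elsewhere in this work.

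The main obstacle is the behaviour at the degenerate endpoint $t\to1^-$ (equivalently $z\to\infty$), where the coefficient $w^2$ of the top order term vanishes. Proving that the threshold trajectory genuinely approaches $f'=1$ rather than stalling at an interior maximum, obtaining the rate at which $f''\to0$, and making the comparison argument valid up to this singular boundary all require delicate barrier constructions near $t=1$; this is the technical heart of the argument, and it is where the hypotheses $\beta>0$ and $0\le f_1<1$ are used to control the sign and magnitude of the lower order terms. The remaining ingredients—local existence, persistence of $f''>0$, and continuous dependence on $\alpha$—are routine.
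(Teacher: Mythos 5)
The first thing to note is that the paper contains no proof of Theorem \ref{exi} to compare against: it is quoted from Hartman [H1, Ch.~XIV, Part II] (as the sentence preceding the statement says), so your proposal is reconstructing the classical argument rather than paralleling anything in this paper. Your existence half is the standard shooting proof and the sketch is sound: the transversality observation (at a zero of $f''$ with $0<f'<1$ one has $f'''=-\beta(1-f'^2)<0$) gives the trichotomy; the overshoot and undershoot sets are open and nonempty by the estimates you indicate; and at a threshold $\alpha^*$ the trajectory stays in the strip with $f''>0$, with global existence since $f$ grows at most linearly. The limit $\ell=\lim f'$ must equal $1$ because $f_1\ge 0$ and $f''>0$ force $\ell>0$, hence $f\to+\infty$, hence $f'''\le-\beta(1-\ell^2)/2<0$ eventually if $\ell<1$, contradicting $f''>0$. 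Your Crocco reduction is also correct: with $t=f'$, $w=f''$ one gets $w^2w''-\beta(1-t^2)w'+(1-2\beta)tw=0$ and the stated Robin-type relation at $t=f_1$.

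The genuine gap is in uniqueness, in both of your routes. Ordering of trajectories in $\alpha$ proves nothing by itself: two ordered trajectories can both satisfy $f'\to1$, so ``at most one $\alpha$ yields the asymptotically correct solution'' is a non sequitur unless you show the separation cannot decay to zero at infinity. And the maximum principle applied to the difference is not available for free: setting $V=w_2-w_1$, the difference of the Crocco equations reads $w_2^2V''-\beta(1-t^2)V'+\left[(1-2\beta)t+(w_1+w_2)w_1''\right]V=0$, and the zeroth-order coefficient has the favorable sign only when $\beta\ge\tfrac12$, or after one has first proved a quantitative negative bound on $ww''$. For $0<\beta<\tfrac12$ --- which under $\beta=2m/(m+1)$ is exactly the regime $0<m<\tfrac13$ this paper is written to handle; compare the $-M_{10}$ estimate in Theorem \ref{Y}, proved there by separate arguments for $m\le\tfrac13$ and $m>\tfrac13$ --- the term $(1-2\beta)t$ is positive and the principle fails without that extra input. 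A clean repair avoiding both problems: from $ww'=-fw-\beta(1-t^2)$ and $f(t)=f_0+\int_{f_1}^{t}s/w(s)\,ds$, two admissible solutions with $w_2(f_1)>w_1(f_1)$ satisfy, for $W=w_2-w_1$,
\begin{equation*}
	W'(t)=\int_{f_1}^{t}\frac{s\,W(s)}{w_1(s)w_2(s)}\,ds+\beta(1-t^2)\frac{W(t)}{w_1(t)w_2(t)},
\end{equation*}
so, using $f_1\ge0$ and $\beta>0$, $W$ remains positive and nondecreasing on $[f_1,1)$, whence $W\ge w_2(f_1)-w_1(f_1)>0$ up to $t=1$; this contradicts $w_1,w_2\to0$ at $t=1$ (both tend to $0$ there since $f''$ is eventually decreasing and integrable). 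Hence the two solutions share $f''(0)$ and coincide by ODE uniqueness. This is where $\beta>0$ and $f_1\ge0$ really enter --- not, as you guessed, only in barrier constructions near the degenerate endpoint.
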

	
	\begin{theorem}\label{asy}
		Let $\beta \geq 0$, and let $f(z)$ be a solution of problem (\ref{fs}), (\ref{bd5}), (\ref{add}). Then there exist constants $c_1>0$ and $c_2$ such that the following asymptotic formulas hold as $z \rightarrow \infty$ :
		\begin{equation}
			1-f^{'} \sim c_1z^{-1-2\beta}exp(-\frac{z^2}{2}-c_2z),\quad f^{''}\sim z(1-f^{'}).
		\end{equation}
	\end{theorem}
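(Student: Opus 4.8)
The plan is to pass to the defect $w:=1-f'$ and read off its decay from a first integral of the reduced equation. By (\ref{add}) and Theorem~\ref{exi} we have $0<w<1$, $w'=-f''<0$, and $w\to0$ as $z\to\infty$; substituting $f'=1-w$ into (\ref{fs}) and using $1-f'^2=(2-w)w$ gives
\[
 w''+f\,w'-\beta(2-w)\,w=0 .
\]
Multiplying by the integrating factor $\mu(z):=\exp\!\big(\int_0^z f\,ds\big)$ turns this into
\[
 (\mu w')'=\beta(2-w)\,w\,\mu\ \ge 0 ,
\]
so $\mu w'$ is nondecreasing; since $w'<0$ it increases to a limit $L\le0$. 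The entire argument will be extracted from this single relation.

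First I would show $f(z)=z+c_2+o(1)$ with a finite constant $c_2$. Since $f'\to1$ we have $f(z)/z\to1$, hence $f(z)\ge z/2$ for $z\ge z_0$; because $w'<0$ this yields $w''+\tfrac{z}{2}w'\ge w''+f w'\ge0$, i.e. $(e^{z^2/4}w')'\ge0$. Thus $e^{z^2/4}w'$ is bounded below, so $|w'|\le c\,e^{-z^2/4}$ and $w$ has Gaussian (in particular integrable) decay. Consequently $f(z)-z=f(0)-\int_0^z w\,ds$ converges to $c_2:=f(0)-\int_0^\infty w\,ds$, and a further integration gives $\mu(z)=K\,e^{z^2/2+c_2 z}\,(1+o(1))$ for some $K>0$.

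Next I would extract the sharp asymptotics. If $\beta=0$ then $\mu w'\equiv L=w'(0)<0$, so $w'\sim L/\mu$ and $w\sim c_1 z^{-1}e^{-z^2/2-c_2 z}$, matching $z^{-1-2\beta}$. If $\beta>0$ I would first rule out $L<0$: were $L<0$, the above estimates force $w\mu\sim C z^{-1}$ and hence $\int^\infty(2-w)w\mu\,ds=\infty$, contradicting $\int^\infty(\mu w')'\,ds=-\mu w'(z_0)+L<\infty$; therefore $L=0$ and
\[
 -\mu w'(z)=\beta\int_z^\infty(2-w)\,w\,\mu\,ds .
\]
Feeding the ansatz $w\mu\sim c_1 K z^{-1-2\beta}$ into this integral relation reproduces it: a Watson/Laplace estimate gives $\int_z^\infty w\mu\,ds\sim c_1K z^{-2\beta}/(2\beta)$, hence $-\mu w'\sim c_1K z^{-2\beta}$, and integrating $-w'$ once more recovers $w\sim c_1 z^{-1-2\beta}e^{-z^2/2-c_2 z}$. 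A bootstrap on the integral equation thus pins the algebraic power exactly, while $c_1>0$ follows because a solution decaying to zero must be a nonzero multiple of the recessive branch. Finally the same first integral yields $-w'(z)\sim z\,w(z)$, and since $f''=-w'$ and $1-f'=w$ this is precisely $f''\sim z(1-f')$.

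The main obstacle is the rigorous asymptotic integration of the reduced equation: one must show that the nonlinear term $\beta w^2$ and the $o(1)$ error in $f-z$ are genuinely negligible, so that $w$ locks onto the recessive (decaying) branch $z^{-1-2\beta}e^{-z^2/2-c_2 z}$ rather than the dominant growing branch $\sim z^{2\beta}$, and that the resulting $c_1$ is strictly positive. Phrasing the problem as the fixed-point/bootstrap for the integral equation $-\mu w'=\beta\int_z^\infty(2-w)w\mu\,ds$ (equivalently invoking the Levinson--Liouville--Green asymptotic-integration theory, as in Hartman, Ch.~X and Ch.~XIV) is what makes this step rigorous while keeping the nonlinear contribution under control.
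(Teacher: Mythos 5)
You should first note that the paper contains no proof of Theorem \ref{asy} at all: it is imported verbatim from Hartman \cite{H1}, Ch.~XIV, Part~II, so the real comparison is with Hartman's argument, and your sketch does follow that classical route (defect $w=1-f'$, first integral $(\mu w')'=\beta(2-w)w\mu$ with $\mu=\exp\bigl(\int_0^z f\,ds\bigr)$, identification of $c_2$ from $f(z)-z\to c_2$). Several of your steps are complete and correct as written: the Gaussian a priori bound $|w'|\le ce^{-z^2/4}$, the convergence $f(z)-z\to c_2$ and $\mu=Ke^{z^2/2+c_2z}(1+o(1))$, the whole case $\beta=0$ (where $\mu w'$ is literally constant), and the proof that $L=\lim\mu w'=0$ when $\beta>0$. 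One slip worth fixing: you justify $w'<0$ by appeal to Theorem \ref{exi}, but that theorem only asserts existence and uniqueness of \emph{a} solution with $f''>0$ (and only for $\beta>0$, $0\le f_1<1$); it says nothing about an arbitrary solution satisfying (\ref{add}), which is what Theorem \ref{asy} concerns. The fact you need follows instead from your own first integral: $\mu w'$ is nondecreasing, and if it ever reached a nonnegative value then $w$ would be nondecreasing from that point on, contradicting $0<w\to0$.

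The genuine gap is the crux of the case $\beta>0$. Verifying that the ansatz $w\mu\sim c_1Kz^{-1-2\beta}$ reproduces itself under the relation $-\mu w'=\beta\int_z^\infty(2-w)w\mu\,ds$ is a consistency check, not a proof: at the level of rigor you employ it does not exclude, for example, $w\mu\sim Cz^{-1-2\beta}(\ln z)^\gamma$, or $w\mu z^{1+2\beta}$ tending to $0$ or $\infty$, all of which "feed in and reproduce" just as well. Likewise, the assertion that $c_1>0$ "because a decaying solution must be a nonzero multiple of the recessive branch" is exactly the dominant/recessive dichotomy that has to be established for this equation, not a fact one may quote; it is the entire content of the theorem. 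To close the gap you must actually carry out the asymptotic integration, e.g.\ (i) substitute $w=\mu^{-1/2}u$ to get a Weber-type equation $u''=Qu$ with $Q=\tfrac14(z+c_2)^2+\tfrac12+2\beta+O(e^{-cz^2})$, then apply Liouville--Green theory with error control to conclude that every positive decaying solution satisfies $u\sim cQ^{-1/4}\exp\bigl(-\int^z\sqrt{Q}\,ds\bigr)$, which unwinds to the stated formula; or (ii) iterate the Riccati equation for $R=-w'/w=f''/(1-f')$: elementary comparison (using that $F=-\mu w'$ is decreasing and $\int_z^\infty\mu^{-1}ds\sim\mu^{-1}/z$) gives $R-f\to0$, which already proves $f''\sim z(1-f')$, but capturing the power $z^{-1-2\beta}$ and the existence of the positive limit $c_1$ requires the further refinement $R-f=\frac{1+2\beta}{z}+(\text{integrable error})$, a nontrivial additional round of estimates. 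As submitted, your proposal outsources precisely this step to Hartman, i.e.\ it does the same thing the paper does, and therefore does not constitute a self-contained proof of the $\beta>0$ case.
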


	Obviously, we can obtain the existence, uniqueness and asymptotic behavior of solutions for equation (\ref{si}) similar to Theorem \ref{exi} and Theorem \ref{asy}. Furthermore, we obtain these properties of solutions for equation (\ref{begin}),(\ref{bd4}). 
	
	\begin{theorem}\label{Y}
		Problem (\ref{begin}),(\ref{bd4}) has one and only one solution with the following properties:
		
		\begin{equation}\label{Y_0}
			\begin{gathered}
				M_5(1-\eta)\sigma \leq Y(\eta) \leq M_6(1-\eta)\sigma,\\
				M_6(1-\eta)(\sigma-K)\leq Y,\quad\text{for} \quad 0<\eta_0\leq\eta<1, \\
				-M_7\sigma \leq Y_{\eta} \leq -M_8\sigma,\\
				-M_9\leq  YY_{\eta\eta} \leq -M_{10},
			\end{gathered}
		\end{equation}
		where $M_i,K=\text{const}>0,i=5,\cdots,10$, $\sigma(\eta)=\sqrt{-\ln \mu (1-\eta)}$,  $\mu=\text{const}$, $0<\mu<1$, $\sigma>K$ for $\eta>\eta_0>0$.
	\end{theorem}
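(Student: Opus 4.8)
The plan is to reduce the degenerate two-point problem (\ref{begin}), (\ref{bd4}) to the self-similar ordinary differential equation (\ref{si}) through the Crocco-type correspondence $\eta = f'(z)$, $Y(\eta) = f''(z)$, and then to transport the existence, uniqueness and asymptotic information supplied by Theorem \ref{exi} and Theorem \ref{asy} back to $Y$. First I would set up this correspondence in both directions. Since $Y = f''$ and $d\eta/dz = f''$, one has $f''' = Y Y_\eta$, so substituting $f' = \eta$, $f'' = Y$, $f''' = Y Y_\eta$ into (\ref{si}) gives the first-order relation $\nu Y Y_\eta + \tfrac{m+1}{2} a f Y + ma(1-\eta^2) = 0$. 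Treating $f$ as a function of $\eta$ via $df/d\eta = f'/f'' = \eta/Y$ and differentiating this relation in $\eta$ eliminates $f$ and produces exactly (\ref{begin}); the same computation runs in reverse, recovering $f$ algebraically from $(Y, Y_\eta, \eta)$. Evaluating (\ref{si}) at $z = 0$ with $f(0) = f'(0) = 0$ yields $f'''(0) = -ma/\nu$, which is precisely $(\nu Y Y_\eta + ma)|_{\eta=0} = 0$, while $f''(\infty) = 0$ gives $Y(1) = 0$. This establishes a bijection between solutions $Y > 0$ of (\ref{begin}), (\ref{bd4}) and solutions of (\ref{si}) in the class $0 < f' < 1$, $f'' > 0$.

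Once the correspondence is in place, existence and uniqueness are immediate. The rescaling $f(z) = \lambda g(\kappa z)$ with $\kappa^2 = (m+1)a/(2\nu)$ and $\lambda = 1/\kappa$ turns (\ref{si}) into the Falkner--Skan equation (\ref{fs}) with $\beta = 2m/(m+1) \in (0,2)$ and data $f_0 = f_1 = 0$, so Theorem \ref{exi} provides a unique solution with $f'' > 0$. Pulling back, (\ref{begin}), (\ref{bd4}) has a unique solution with $Y > 0$ on $(0,1)$ and $Y(0) = f''(0) > 0$.

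For the quantitative bounds I would feed the asymptotics of Theorem \ref{asy}, in the form valid for (\ref{si}), into the change of variables. Writing $1 - \eta = 1 - f' \sim C_1 z^{-p} \exp(-\alpha z^2 - C_2 z)$ with $\alpha = (m+1)a/(4\nu)$ gives $\sigma(\eta) = \sqrt{-\ln(\mu(1-\eta))} \sim \sqrt{\alpha}\, z$, and the companion formula $f'' \sim \mathrm{const}\cdot z(1-f')$ then yields $Y \asymp (1-\eta)\sigma$, while differentiating gives $Y_\eta = f'''/f'' \asymp -\sigma$. These produce the two-sided estimates for $Y$ and $Y_\eta$ near $\eta = 1$; on any compact subset of $[0,1)$ the same bounds follow from continuity together with $Y > 0$ and $Y_\eta < 0$. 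The refined lower bound $M_6(1-\eta)(\sigma - K) \leq Y$ for $\eta \geq \eta_0$ I would extract from the sharp leading constant together with the next-order term in the expansion. Finally, the estimate for $Y Y_{\eta\eta}$ is obtained algebraically from the equation itself: (\ref{begin}) gives $\nu Y Y_{\eta\eta} = ma(1-\eta^2)\frac{Y_\eta}{Y} + \frac{3m-1}{2}a\eta$, and since $Y_\eta/Y \asymp -1/(1-\eta)$ the weight $(1-\eta^2)$ exactly cancels the singularity, so the right-hand side tends to $-\tfrac{(m+1)a}{2}$ as $\eta \to 1$ and equals $-m^2a^2/(\nu Y(0)^2)$ at $\eta = 0$, staying strictly negative and bounded throughout.

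I expect the main obstacle to be not existence and uniqueness, which Theorem \ref{exi} hands us once the correspondence is established, but the uniform conversion of the endpoint asymptotics into the stated inequalities across the entire interval, and in particular the control at the degenerate boundary $\eta = 1$. There $Y$, $Y_\eta$ and the quotient $Y/((1-\eta)\sigma)$ must be pinned down to leading order, and the latter to next order for the refined lower bound; one must also verify that the ratio $Y_\eta/Y$, which blows up like $1/(1-\eta)$, is tamed precisely by the $(1-\eta^2)$ factor so that $Y Y_{\eta\eta}$ remains bounded away from both $0$ and $-\infty$. Matching these sharp asymptotic constants with the coarser interior continuity bounds, so that a single choice of $M_5, \dots, M_{10}, K$ and $\mu$ works on all of $(0,1)$, is the delicate part of the argument.
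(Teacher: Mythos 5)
Your reduction to the self-similar equation (\ref{si}), the rescaling to Falkner--Skan with $\beta = 2m/(m+1)$, and the use of Theorems \ref{exi} and \ref{asy} to obtain existence, uniqueness, and the two-sided bounds on $Y$ and $Y_\eta$ is essentially the paper's own route; the only cosmetic difference there is that the paper derives the $Y_\eta$ bounds from an equivalent integral equation (following \cite{G}) rather than from $Y_\eta = f'''/f''$ plus compactness, and both work. The boundedness $-M_9 \leq YY_{\eta\eta}$ also follows, as you say, from the identity $\nu YY_{\eta\eta} = ma(1-\eta^2)Y_\eta/Y + \tfrac{3m-1}{2}a\eta$ and the two-sided bounds.

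The genuine gap is the uniform upper bound $YY_{\eta\eta} \leq -M_{10} < 0$ when $m > \tfrac{1}{3}$. Your argument checks the two endpoints: at $\eta = 0$ the boundary condition gives $-m^2a^2/(\nu Y(0)^2)$, and as $\eta \to 1$ the sharp asymptotics give $(1-\eta^2)Y_\eta/Y \to -2$, hence the limit $-\tfrac{(m+1)a}{2}$. Both computations are correct, but they say nothing about the interior of $(0,1)$. For $m > \tfrac{1}{3}$ the term $\tfrac{3m-1}{2}a\eta$ is strictly positive, and the only negativity available comes from $ma(1-\eta^2)Y_\eta/Y$; the two-sided bounds $M_5(1-\eta)\sigma \leq Y \leq M_6(1-\eta)\sigma$ and $-M_7\sigma \leq Y_\eta \leq -M_8\sigma$ only yield $ma(1-\eta^2)Y_\eta/Y \leq -ma(1+\eta)M_8/M_6$, and since $M_8/M_6$ is an uncontrolled ratio of constants this need not dominate $\tfrac{3m-1}{2}a\eta$ in the middle of the interval. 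Endpoint negativity plus continuity does not prevent $YY_{\eta\eta}$ from touching or crossing zero at an interior point; equivalently, nothing in your argument shows that $\ln f''$ is uniformly strictly concave in $z$ away from the asymptotic regimes. This is exactly where the paper introduces a new mechanism: it sets $R = YY_{\eta\eta}$, differentiates (\ref{begin}) to obtain a first-order operator $Q$ satisfying $Q(R + M_{10}) > 0$ for $M_{10}$ small, verifies $R < -M_{10}$ at $\eta = 0$ and along a sequence $\eta_n \to 1$ (this is where the refined inequality $M_6(1-\eta)(\sigma - K) \leq Y$ from (\ref{Y_0}) is actually used --- you propose to prove it but never use it), and concludes $R < -M_{10}$ on all of $[0,1)$ by a maximum-principle argument. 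Note that the case you treat as routine is precisely the hard one: for $m \leq \tfrac{1}{3}$ the sign of $\tfrac{3m-1}{2}$ makes the direct estimate work, and it is the case $m > \tfrac{1}{3}$ that forces the paper's maximum-principle machinery; your proposal makes no case distinction and so misses this entirely.
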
 
	
	\begin{proof}
		The solution of equation (\ref{si}) has the following properties:
		\begin{equation}
			\begin{gathered}
				f(0)\geq 0,\quad f(z)>0, \quad \text{for}\quad 0< z<\infty,\\
				0<f^{'}(z)<1,\quad \text{for}\quad 0<z<\infty,\\
				f^{''}(z)>0,\quad \text{for}\quad 0\leq z<\infty,\\
				f^{'''}(z)<0,\quad \text{for}\quad 0< z<\infty,\\
				1-f^{'}(z)\sim C_1z^{-1-2\beta}exp(-\frac{z^2}{2}-C_2z),\quad \text{as}\quad z\rightarrow \infty,
			\end{gathered}
		\end{equation}
	    where $C_1,C_2=\text{const}$, $C_1>0$. We can easily obtain $Y(\eta)$ is bounded from below and above for $0<\eta<1-\delta$, where $\delta$ is sufficiently small. And we have
		\begin{equation}
			1-f^{'}(z) \sim C_1z^{-1-2\beta}exp(-\frac{z^2}{2}-C_2z) \sim C_3exp(-C_4z^2),\quad \text{as}\quad z\rightarrow \infty,
		\end{equation}
	    where $C_3,C_4=\text{const}>0$. It is easy to obtain
	    \begin{equation}
	   	    Y(\eta) \sim C_5(1-\eta)\sigma(\eta),\quad \text{as}\quad \eta\rightarrow 1.
	    \end{equation}
		Thus, choosing suitable $M_5, M_6, K$, we complete the proof of the first two inequalities. 
		
		By the method in \cite{G}, it is easy to see that a solution for equation (\ref{begin}) if and only if it solves the integral equation
		\begin{equation}
			\nu Y(\eta)=\int_{\eta}^{1}\frac{(1-s)(m+ms+\frac{m+1}{2}s)a}{Y(s)}ds+(1-\eta)\frac{m+1}{2}a\int_{0}^{\eta}\frac{s}{Y(s)}ds.
		\end{equation}
		From this we know that 
		\begin{equation}
			\nu Y_{\eta}(\eta)=-ma\frac{1-\eta^2}{Y(\eta)}-\frac{m+1}{2}a\int_{0}^{\eta}\frac{s}{Y(s)}ds.
		\end{equation}
		
		We can obtain the following properties:
		\begin{equation}
			\begin{aligned}
				\nu Y_{\eta}&\geq -ma\frac{1-\eta^2}{M_5(1-\eta)\sigma}-\frac{m+1}{2}a\int_{0}^{\eta}\frac{s}{M_5(1-s)\sigma(s)}ds\\
				&\geq -ma\frac{1+\eta}{M_5\sigma}+\frac{m+1}{2}a(\frac{\eta}{M_5\sigma}-\frac{2(\sigma-\sigma(0))}{M_5})\\
				&\geq -M_7\sigma,\\
				\nu Y_{\eta}&\leq-ma\frac{1-\eta^2}{M_6(1-\eta)\sigma}-\frac{m+1}{2}a\int_{0}^{\eta}\frac{s}{M_6(1-s)\sigma(s)}ds\\
				&= -ma\frac{1+\eta}{M_6\sigma}+\frac{m+1}{2}a\left(\int_{0}^{\eta}\frac{ds}{M_6\sigma(s)}-\frac{2(\sigma-\sigma(0))}{M_6}\right)\\
				&\leq -M_8\sigma.
			\end{aligned}
		\end{equation}
		
		From equation (\ref{begin}) we find that
		\begin{equation}
			\nu YY_{\eta\eta}=(1-\eta^2)ma\frac{Y_{\eta}}{Y}+\eta\frac{3m-1}{2}.
		\end{equation}
		We have
		\begin{equation}
				|\nu YY_{\eta\eta}|\leq M_9,
		\end{equation}
	    And for $m\leq\frac{1}{3}$, we have
	    \begin{equation}
	    	\nu YY_{\eta\eta}\leq -(1+\eta)ma\frac{M_8}{M_6}+\eta\frac{3m-1}{2}\leq -M_{10}.
	    \end{equation}
    
        Now, let prove the estimates for $m>\frac{1}{3}$. It follows from (\ref{begin}) that
        \begin{equation}
        	\nu YY_{\eta \eta}=\left(1-\eta^2\right) ma \frac{Y_\eta}{Y}+\eta\frac{3m-1}{2}a.
        \end{equation}
        Set $R=YY_{\eta\eta}$. Differentiating equation (\ref{begin}) with respect to $\eta$，we obtain the following equation for $R$:
        \begin{equation}
        	Q(R):=\nu YR_\eta+\left[\nu Y_\eta+(\eta^2-1)\frac{ma}{Y}-\frac{\nu Y}{\eta}\right]R=-\frac{maY_\eta}{\eta}+\eta\frac{m-1}{2}aY_\eta.
        \end{equation}
    
        Let us consider $R+M_{10}$. We have
        \begin{equation}
        	Q(R+M_{10})=-\frac{maY_\eta}{\eta}+\eta\frac{m-1}{2}aY_\eta+\left[\nu Y_\eta+(\eta^2-1)\frac{ma}{Y}-\frac{\nu Y}{\eta}\right]M_{10}>0,
        \end{equation}
        provided that $M_{10}$ is small enough.
        
        It follows from the second inequality for $Y$ that there is a sequence $\eta_n\rightarrow 1$, $n\rightarrow \infty$, such that
        \begin{equation}
        	\begin{aligned}
        		Y_\eta|_{\eta=\eta_n}&\leq M_6\left.\left(-\sigma+K+\frac{1}{2\sigma}\right)\right|_{\eta=\eta_n},\\
        		R|_{\eta=\eta_n}&= \frac{1}{\nu}\left.\left((1-\eta^2) ma \frac{Y_\eta}{Y}+\eta\frac{3m-1}{2}a\right)\right|_{\eta=\eta_n}\\
        		&\leq\frac{1}{\nu}\left.\left[\frac{ma(1+\eta)}{\sigma}\left(-\sigma+K+\frac{1}{2\sigma}+\eta\frac{3m-1}{2}a\right)\right]\right|_{\eta=\eta_n}\\
        		&< -M_{10},
        	\end{aligned}
        \end{equation} 
        if $n$ is sufficiently large.
        
        And we have
        \begin{equation}
        	R(0)=\frac{maY_\eta(0)}{\nu Y(0)}<-M_{10}.
        \end{equation}
        Therefore, $R+M_{10}<0$ for $0\leq\eta\leq \eta_n$. Since $\eta_n\rightarrow 1$ as $n\rightarrow \infty$, we have $R<-M_{10}$ for $0\leq\eta<1$.
        
	\end{proof}

	\section{Symmetric stationary boundary}
	
	The proof of the existence of solutions for problem (\ref{qy}), (\ref{bd3}) is accomplished by the line method which amounts to the construction of approximate solutions for the given problem by means of solutions of a ordinary differential equations.
		
	Set $f^k\equiv f^k(\eta)\equiv f(kh,\eta)$, where $h=\text{const}>0,k=0,1,2,\cdots,[X/h]$. We replace (\ref{qy}), (\ref{bd3}) with the system of differential equations:
	\begin{equation}\label{line}
		\begin{aligned}
			L_k(\omega):=\nu (\omega^k)^2\omega^k_{\eta\eta}-\eta (A^k+\mu_k h)\frac{\omega^k-\omega^{k-1}}{h}+(\eta^2-1)B^k\omega^k_{\eta}-\eta C^k\omega^k=0,\\
			\quad 0\leq \eta<1,\quad k=0,1,\cdots,[X/h],
		\end{aligned}
	\end{equation}
	with the conditions
	\begin{equation}\label{bd6}
		\omega^k(1)=0,\quad (\nu \omega^k \omega^k_{\eta}-v_1^k\omega^k+B^k)|_{\eta=0}=0.
	\end{equation}
	
	Here $\mu_k=0$ for $k=0$, and $\mu_k$, for $k\geq 1$, is a sufficiently large constant to be chosen later and $\mu_k\geq \mu_{k-1}$. It should be noted that when $m<1$, $\mu_k\equiv 0$ for $k\geq 0$. Obviously, the system (\ref{line}), (\ref{bd6}) is just the system (\ref{begin}) and (\ref{bd4}) when $k=0$. Next, we use the solution of problem (\ref{begin}),(\ref{bd4}) to proof the existence of solutions for problem (\ref{line}),(\ref{bd6}).
	
	In what follows, $M_i$, $K_i$, $C_i$, $N_i$ stand for positive constants independent of $h$.
	
	\begin{lemma}
		Problem (\ref{line}), (\ref{bd6}) admits a solution $\omega^k(\eta), k=1,2,\cdots,[X/h]$, which is continuous for $0\leq \eta \leq 1$ and infinitely differentiable on the segment $0\leq \eta<1$, provided that $U, v_0$ satisfy the conditions (\ref{assum}). The following estimate holds for this solution 
		\begin{equation}\label{VV}
			K_1(1-\eta)\leq \omega^k(\eta) \leq K_2(1-\eta)\sigma
		\end{equation}
		for $kh\leq X, h\leq h_0, h_0=\text{const}>0$, where $\sigma(\eta)=\sqrt{-\ln \mu(1-\eta)}$, $\mu=\text{const}$, $0<\mu<1$.
	\end{lemma}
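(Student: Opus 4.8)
The plan is to argue by induction on $k$, taking the solution $Y$ of problem (\ref{begin}),(\ref{bd4}) furnished by Theorem \ref{Y} as the base step $k=0$: there $\omega^0=Y$, and the bound (\ref{VV}) follows from (\ref{Y_0}), since $\sigma\ge\sigma(0)=\sqrt{-\ln\mu}>0$ gives $M_5(1-\eta)\sigma\ge K_1(1-\eta)$. Suppose now that $\omega^{k-1}$ has been constructed, is positive and infinitely differentiable on $[0,1)$, continuous on $[0,1]$, and satisfies $K_1(1-\eta)\le\omega^{k-1}\le K_2(1-\eta)\sigma$. For fixed $k$ the relation (\ref{line}) is a second-order nonlinear ordinary differential equation for $\omega^k$ into which $\omega^{k-1}$ enters only through the known right-hand side, so that it may be written as
\[
	\nu(\omega^k)^2\omega^k_{\eta\eta}+(\eta^2-1)B^k\omega^k_\eta-\Big[\tfrac{\eta(A^k+\mu_k h)}{h}+\eta C^k\Big]\omega^k=-\tfrac{\eta(A^k+\mu_k h)}{h}\,\omega^{k-1}.
\]
This is a quasilinear problem whose zeroth-order coefficient can be kept nonpositive: for $0<m<1$ the transport term $A^k=khV>0$ already forces this (so $\mu_k\equiv0$), whereas for $m\ge1$ one takes $\mu_k$ large, which makes the bracketed coefficient strongly negative and is what secures the comparison argument below.

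The main computation I would carry out is the verification that $\overline\omega=K_2(1-\eta)\sigma$ and $\underline\omega=K_1(1-\eta)$ are a super- and a subsolution, i.e. $L_k(\overline\omega)\le0$ and $L_k(\underline\omega)\ge0$. Substituting each barrier into the operator and using the explicit forms $A=\xi V$, $B=mV+\xi V_\xi$, $C=\tfrac{3m-1}{2}V+\xi V_\xi$, the smallness of $a_1,v_1$ from (\ref{assum}), and the induction bounds on $\omega^{k-1}$ reduces the check to elementary inequalities; the weight $\sigma=\sqrt{-\ln\mu(1-\eta)}$ is chosen precisely so that $\overline\omega$ reproduces the degeneration rate of $Y$ near $\eta=1$ and the supersolution inequality closes without losing a power of $1-\eta$. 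One also verifies that the barriers are compatible with the conditions (\ref{bd6}): both vanish at $\eta=1$, and at $\eta=0$ the nonlinear flux condition $\nu\omega\omega_\eta-v_1^k\omega+B^k=0$ is respected thanks to $v_1(0)=0$ and $|v_1|\le N_2x$.

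Granting these barriers, I would obtain existence together with the estimate (\ref{VV}) by a regularized fixed-point scheme. Since the equation degenerates at $\eta=1$, where $\omega^k\to0$ and $\nu(\omega^k)^2$ vanishes, I would first solve on $[0,1-\delta]$ with $(\omega^k)^2$ replaced by $(\omega^k)^2+\varepsilon$ to restore uniform ellipticity; freezing this leading coefficient (and the factor $\omega$ in the flux condition) at an element of the convex set $\{w:K_1(1-\eta)\le w\le K_2(1-\eta)\sigma\}$ turns each step into a linear two-point boundary value problem (with $\omega^k$ at $\eta=1-\delta$ prescribed by the barrier), and Schauder's theorem yields a fixed point confined between the barriers, so that the two-sided bound is automatic. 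Passing to the limits $\varepsilon\to0$ and $\delta\to0$, with interior regularity estimates valid because $\omega^k$ stays bounded below on every $[0,1-\delta]$, produces a solution infinitely differentiable on $[0,1)$ and continuous up to $\eta=1$.

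The main obstacle I anticipate is the degeneracy at $\eta=1$. It forces the regularization in the existence step and, more delicately, it requires the comparison at the endpoint to be run on $[0,1-\delta]$ as $\delta\to0$ — as in the proof of Theorem \ref{Y}, possibly along a sequence $\eta_n\to1$ — so that the simultaneous vanishing of $\omega^k$ and of the barriers at the sharp rate $(1-\eta)\sigma$ does not spoil the inequalities. The second delicate point is the nonlinear Robin condition (\ref{bd6}) at $\eta=0$, which couples the value and the slope of $\omega^k$; the barriers and the choice of $\mu_k$ must be arranged so that it holds uniformly in $h$, which is where the hypotheses on $v_1$ in (\ref{assum}) enter.
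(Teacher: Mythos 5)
Your overall skeleton --- the barriers $K_1(1-\eta)$ and $K_2(1-\eta)\sigma$, the $\varepsilon$-regularization $\nu(\omega^k)^2+\varepsilon$ of the degenerate leading coefficient, a fixed-point argument, then passage to the limit --- is the same as the paper's. But the fixed-point step has a genuine gap: you claim that freezing the leading coefficient (and the factor $\omega$ in the flux condition) at an element $\theta$ of the convex set $\{w: K_1(1-\eta)\le w\le K_2(1-\eta)\sigma\}$ and invoking Schauder yields a fixed point ``confined between the barriers, so that the two-sided bound is automatic.'' That invariance fails. The upper barrier $H_1=K_2(1-\eta)\sigma$ is a supersolution only when the coefficient multiplying $H_{1\eta\eta}$ is comparable to $\nu H_1^2$ itself: since $H_{1\eta\eta}=-K_2\left(\frac{1}{2\sigma(1-\eta)}+\frac{1}{4\sigma^3(1-\eta)}\right)$, the favorable negative term in the operator is $\nu H_1^2H_{1\eta\eta}\sim-\nu K_2^3(1-\eta)\sigma$, and it beats the unfavorable positive transport term $(\eta^2-1)B^kH_{1\eta}\sim 2B^kK_2(1-\eta)\sigma$ precisely because it carries the factor $K_2^3$ (this is the paper's ``$K_2$ suitably large''). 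If the coefficient is instead frozen at $\nu\theta^2+\varepsilon$ with $\theta$ near the lower barrier, the negative term is only of size $\nu K_1^2K_2(1-\eta)/\sigma+\varepsilon K_2/[(1-\eta)\sigma]$, which, away from $\eta=1$ and uniformly as $\varepsilon\to0$, cannot dominate $B^kK_2(1-\eta)\sigma$ unless $K_1$ is large --- contradicting the smallness of $K_1$ required for the lower barrier. The boundary condition has the same defect: the lower-barrier inequality $-\nu K_1-v_1^k+B^k/K_1>0$ uses $B^k/\omega$ evaluated at the barrier value $K_1$; if you freeze it as $B^k/\theta$ with $\theta(0)$ possibly of size $K_2\sigma(0)$, the term is small and the sign can fail. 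So your Schauder map need not map the convex set into itself, and (\ref{VV}) is not automatic from the construction.

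This is exactly why the paper does not use a Schauder self-map. It runs a Leray--Schauder continuation (Theorem \ref{LS}, homotopy parameter $\gamma$ in (\ref{omega1})--(\ref{omega2}), linearized as (\ref{theta1})--(\ref{theta2})), which requires only a priori bounds on \emph{actual solutions} of the homotopy family, not invariance of a set under a frozen-coefficient map. Those bounds come from: the lower barrier, whose verification is coefficient-independent because $H_{\eta\eta}=0$; a crude, $\varepsilon$-dependent upper bound via the substitution $\omega^k=(K_3-e^{\alpha\eta})\bar{\omega}^k$ and the maximum principle; and the cutoff $\psi$ with $\varphi\le0$ built into the boundary condition (\ref{xbd}), which keeps the coefficient of $\omega^k$ there of a good sign along the whole homotopy. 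Only after existence is settled does the paper prove the sharp upper bound $\omega^k\le K_2(1-\eta)\sigma$, by comparison applied to the solution of the nonlinear regularized problem (\ref{epsilon}), where the difference of the nonlinear leading terms is absorbed into the zeroth-order term $\nu H_{1\eta\eta}(\omega^k+H_1)S_1^k\le0$. To salvage your scheme you would have to enlarge the convex set to $\{K_1(1-\eta)\le w\le C_\varepsilon\}$ using the crude bound and then recover (\ref{VV}) afterwards by this nonlinear comparison --- i.e., essentially reproduce the paper's argument. Your secondary deviations (solving sequentially in $k$ rather than for the whole vector, and truncating to $[0,1-\delta]$ rather than keeping the uniformly elliptic regularized problem on all of $[0,1]$ with $\omega^k(1)=0$) are workable but inessential, and the truncation creates an avoidable boundary-data issue at $\eta=1-\delta$.
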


	\begin{proof}
		We obtain the solution of problem (\ref{line}), (\ref{bd6}) as a limit ($\varepsilon \rightarrow 0$) of solutions of the following system
		\begin{equation}\label{epsilon}
			\begin{aligned}
				L_{\varepsilon,k}(\omega):=(\nu(\omega^k)^2+\varepsilon)\omega^k_{\eta\eta}-\eta (A^k+\mu_k h)\frac{\omega^k-\omega^{k-1}}{h}+(\eta^2-1)B^k\omega^k_{\eta}-\eta C^k\omega^k=0,\\
				0\leq \eta<1,\quad k=1,\cdots,[X/h],\quad \varepsilon>0,
			\end{aligned}
		\end{equation}
		supplemented with the conditions (\ref{bd6}). We have established these result for $k=0$ in Theorem \ref{Y}. we obtain a prior estimate from below by induction. Setting $H^k=K_1(1-\eta)$, we find that 
		\begin{equation}
			L_{\varepsilon,k}(H)\geq K_1(1-\eta)\left[(1+\eta)B^k-\eta C^k\right]>0
		\end{equation}
		for $0<\eta<1$ and $kh\leq X$. Setting $l_{\varepsilon,k}(\omega):=\left.\left(\nu \omega_{\eta}-v_1^k+\frac{B^k}{\omega^k}\right)\right|_{\eta=0}$, we obtain
		\begin{equation}
			l_{\varepsilon,k}(H)=-\nu K_1-v^k_1+\frac{B^k}{K_1}>0,
		\end{equation}
		provided that $K_1$ is sufficiently small. Set $S^k=\omega^k-H^k$. Then we have
		\begin{equation}
			\begin{gathered}
				(\nu(\omega^k)^2+\varepsilon)S^k_{\eta\eta}-\eta (A^k+\mu_k h)\frac{S^k-S^{k-1}}{h}+(\eta^2-1)B^kS^k_{\eta}-\eta C^kS^k<0,\\
				\left.\left(\nu S^k_{\eta}-\frac{B^k}{H^k\omega^k}S^k\right)\right|_{\eta=0}<0.
			\end{gathered}
		\end{equation}
		This inequalities imply that $S^k\geq 0$ and $\omega^k\geq K_1(1-\eta)$, since $-\eta \frac{(A^k+\mu_k h)}{h}-\eta C^k\leq 0$ and $-\frac{B^k}{H^k\omega^k}<0$.
		
		Consider the system (\ref{epsilon}) with the boundary conditions:
		\begin{equation}\label{xbd}
			\omega^k(1)=0,\quad \left.\left(\nu \omega^k_{\eta}-v_1^k+\frac{B^k}{\psi(\omega^k)}\right)\right|_{\eta=0}=0.
		\end{equation}
		where $\psi(x)$ is an infinitely differentiable function defined for $-\infty <x<\infty$ and such that $\psi(x)=x$ for $x\geq K_1$, $\psi(x)=K_1/2$ for $x\leq K_1/4$ and $0\leq \psi'(x)\leq 1$ for $K_1/4\leq x\leq K_1$. 
		
		For a solution $\tilde{\omega}^k$ of problem (\ref{epsilon}), (\ref{xbd}), we have $\tilde{\omega}^k\geq H^k$. Indeed, let $\tilde{S}^k=\tilde{\omega}^k-H^k$. Then
		\begin{equation}
			\left.\left(\nu H^k_{\eta}-v_1^k+\frac{B^k}{\psi(H^k)}\right)\right|_{\eta=0}=-\nu K_1-v^k_1+\frac{B^k}{K_1}>0,
		\end{equation}
		because of the choice of $K_3$, and
		\begin{equation}
			\left.\left(\nu \tilde{S}^k_{\eta}-\frac{B^k\psi'(\chi_k)}{\psi(H^k)\Psi(\tilde{\omega}^k)}\tilde{S}^k\right)\right|_{\eta=0}<0,\quad \psi'(\chi_k)\geq 0.
		\end{equation}
		
		Hence, $\psi(\tilde{\omega}^k)=\tilde{\omega}^k$, and the solution $\tilde{\omega}^k$ is also a solution of problem (\ref{epsilon}), (\ref{bd6}).
		
		The existence of solutions for problem (\ref{epsilon}), (\ref{xbd}) for $\varepsilon>0$ can be established by the well-known method based on the Leray-Schauder theorem.
		
		\begin{theorem}{(Leray-Schauder)}\label{LS}
			In a Banach space $X$, consider a family of mappings $y=T(x, k)$,
			where $x, y \in X $, $k$ is a real parameter varying on the segment $a \leq k \leq b$. 
			
			Assume that:
			
			1) $T(x, k)$ is defined for all $x \in X$ and $a \leq k \leq b$;
			
			2) for any fixed $k$, the operator $T(x, k)$ is continuous on $X$, i.e., for any $x^0 \in X$ and any $\varepsilon>0$, there is a constant $\delta>0$ such that $\left\|T(x, k)-T\left(x^0, k\right)\right\|<\varepsilon$, provided that $\left\|x-x^0\right\|<\delta$;
			
			3) on bounded subsets of $X$, the operators $T(x, k)$ are uniformly continuous with respect to $k$, i.e., for any bounded set $X_0 \subset X$ and any $\varepsilon>0$, there is a constant $\delta>0$ such that for $x \in X_0$ and $|k_1-k_2|<\delta$, $k_1, k_2 \in[a, b]$ we have $\left\|T\left(x, k_1\right)-T\left(x, k_2\right)\right\|<\varepsilon$;
			
			4) for any fixed $k, T(x, k)$ is a compact operator, i.e, it maps each bounded subset of $X$ into a compact subset of $X$;
			
			5) there exists a constant $M$ such that for any solution $x$ of the equation $x-T(x, k)=0(x \in X, k \in[a, b])$, we have $\|x\| \leq M$;
			
			6) the equation $x-T(x, a)=0$ has a unique solution in $X$.
			
			Then the equation $x-T(x, b)=0$ admits a solution in $X$.
		\end{theorem}
		
		Consider the following system of differential equations depending on a parameter $\gamma\in [0,1]$:
		\begin{equation}\label{omega1}
			(\nu\gamma(\omega^k)^2+\varepsilon)\omega^k_{\eta\eta}-\eta (A^k+\mu_k h)\frac{\omega^k-\omega^{k-1}}{h}+(\eta^2-1)B^k\omega^k_{\eta}-\eta C^k\omega^k=0,
		\end{equation}
		together with the boundary conditions
		\begin{equation}\label{omega2}
			\omega^k(1)=0,\quad \left.\left(\nu \omega^k_{\eta}-v_1^k+B^k\left[\varphi(\gamma \omega^k)\omega^k+\frac{1}{\psi(0)}\right]\right)\right|_{\eta=0}=0,
		\end{equation}
		where
		\begin{equation}
			x\varphi(x)=\frac{1}{\psi(x)}-\frac{1}{\Psi(0)}=-x\int_{0}^{1}\frac{\psi'(sx)}{\psi^2(sx)ds},\quad \varphi(x)\leq 0.
		\end{equation}
		
		For $\gamma=0$, problem (\ref{omega1}), (\ref{omega2}) is linear, and it turns into problem (\ref{epsilon}), (\ref{xbd}) for $\gamma=1$.
		
		Let us verify the conditions of the Leray-Schauder theorem \ref{LS} for this system. Consider the operator $T(\theta, \gamma)=\omega$ which maps any vector valued function $\theta$ of class $C^\alpha([0,1])$ into $\omega \equiv(\omega^1, \cdots, \omega^m), m=[X / h]$, where $\omega$ is a solution of the following linear system of differential equations:
		\begin{equation}\label{theta1}
			(\nu\gamma(\theta^k)^2+\varepsilon)\omega^k_{\eta\eta}-\eta (A^k+\mu_k h)\frac{\omega^k-\omega^{k-1}}{h}+(\eta^2-1)B^k\omega^k_{\eta}-\eta C^k\omega^k=0,
		\end{equation}
		with the boundary conditions
		\begin{equation}\label{theta2}
			\omega^k(1)=0,\quad \left.\left(\nu \omega^k_{\eta}-v_1^k+B^k\left[\varphi(\gamma \theta^k)\omega^k+\frac{1}{\psi(0)}\right]\right)\right|_{\eta=0}=0.
		\end{equation}
		
		For $\gamma=0$, problem (\ref{theta1}), (\ref{theta2}) admits a unique solution, which follows from the fact that the problem is linear, the coefficient of $\omega^k$ in (\ref{theta1}) is negative, and $\varphi(x) \leq 0$ for $kh\leq X$.
		
		The solutions $\omega^k$ of the problem (\ref{omega1}), (\ref{omega2}) are uniformly (with respect to $\gamma$) bounded, together with their second derivatives. We have
		\begin{equation}
			\left.\left(\nu H^k_{\eta}-v_1^k+B^k\left[\varphi(\gamma \omega^k)H^k+\frac{1}{\psi(0)}\right]\right)\right|_{\eta=0}=-\nu K_1-v_1^k+B^k\left[\varphi(\gamma \omega^k)K_1+\frac{2}{K_1}\right]>0,
		\end{equation}
		provided that $K_1$ is small enough and is independent of $\gamma, h, \varepsilon$. Hence, for $S^k=\omega^k-H^k$, we obtain
		\begin{equation}
			\begin{gathered}
				(\nu\gamma(\omega^k)^2+\varepsilon)S^k_{\eta\eta}-\eta (A^k+\mu_k h)\frac{S^k-S^{k-1}}{h}+(\eta^2-1)B^kS^k_{\eta}-\eta C^kS^k<0,\\
				\left.\left(\nu S^k_{\eta}+B^k\varphi(\gamma \omega^k)S^k\right)\right|_{\eta=0}<0,
			\end{gathered}
		\end{equation}
		which readily imply that $S^k\geq 0$ and $\omega^k\geq K_1(1-\eta)$ for all $\gamma$, $kh\leq X, 0\leq \eta\leq 1, 0<\varepsilon\leq 1$.
		
		Let $\omega^k=(K_3-e^{\alpha\eta})\bar{\omega}^k$. Choosing $\alpha, K_3$ suitably large, we obtain
		\begin{equation}
			(\nu\gamma(\omega^k)^2+\varepsilon)\bar{\omega}^k_{\eta\eta}-\eta (A^k+\mu_k h)\frac{\bar{\omega}^k-\bar{\omega}^{k-1}}{h}+\bar{B}^k\bar{\omega}^k_{\eta}+\bar{C}^k\bar{\omega}^k=0,
		\end{equation} 
		where $\bar{C}^k<0$, and the boundary conditions
		\begin{equation}
			\bar{\omega}^k(1)=0,\quad \left.\left(\nu \bar{\omega}^k_{\eta}+\bar{D}^k\bar{\omega}^k+\bar{E}^k\right)\right|_{\eta=0}=0,
		\end{equation}
		where $\bar{D}^k<0$. Indeed, we have
		\begin{equation}
			\begin{aligned}
				\bar{C}^k&=-\eta C^k-\frac{(\nu\gamma(\omega^k)^2+\varepsilon)\alpha^2e^{\alpha\eta}}{K_3-e^{\alpha\eta}}-\frac{(\eta^2-1)B^k\alpha e^{\alpha\eta}}{K_3-e^{\alpha\eta}},\\
				\bar{D}^k&=B^k\varphi(\gamma \omega^k)-\frac{\alpha e^{\alpha\eta}}{K_3-e^{\alpha\eta}}.
			\end{aligned}	
		\end{equation}
		By the maximum principle, it easily follows that $\omega^k$ is bounded uniformly in $\gamma$.
		
		The estimate for the derivatives $\omega^k_{\eta}$, uniform in $\gamma$, follows from the first order equations obtained from (\ref{omega1}) for the function $\omega^k_{\eta}$, as well as from the estimate for $\omega^k_{\eta}$ at $\eta=0$ obtained from (\ref{omega2}). Indeed, it follows from (\ref{omega1}), for $z^k=\omega^k_{\eta}$, that
		\begin{equation}
			(\nu\gamma(\omega^k)^2+\varepsilon)z^k_{\eta}+(\eta^2-1)B^kz^k=\eta C^k\omega^k+\eta (A^k+\mu_k h)\frac{\omega^k-\omega^{k-1}}{h}.
		\end{equation}
		Let 
		\begin{equation}
			I(\eta)=\frac{(\eta^2-1)B^k}{\nu\gamma(\omega^k)^2+\varepsilon}, J(\eta)=\frac{\eta C^k\omega^k+\eta (A^k+\mu_k h)\frac{\omega^k-\omega^{k-1}}{h}}{\nu\gamma(\omega^k)^2+\varepsilon},
		\end{equation}
		and we can obtain
		\begin{equation}
			z^k_{\eta}+Iz^k=J,
		\end{equation}
		we have
		\begin{equation}
			z^k(\eta)= \left(z^k(0)+\int_{0}^{\eta}J(\tau)e^{\int_{0}^{\tau}I(s)ds}d\tau\right)e^{-\int_{0}^{\eta}I(s)ds}.
		\end{equation}
		Hence, $|\omega^k_{\eta}|\leq K_4(\varepsilon)$.
		
		The derivatives $\omega^k_{\eta \eta}$ are found from (\ref{omega1}), since we have
		\begin{equation}
			\varepsilon|\omega^k_{\eta\eta}| \leq \left|\eta (A^k+\mu_k h)\frac{\omega^k-\omega^{k-1}}{h}+(1-\eta^2)B^k\omega^k_{\eta}+\eta C^k\omega^k\right|\leq K_5(\varepsilon).
		\end{equation} 
		
		According to the well-known estimates of Schauder type (see Gilbarg and Trudinger \cite{D}), we can estimate (uniformly in $\gamma$) the $C^{2,\alpha}$ norm of solution $\omega^k$ to problem (\ref{theta1}), (\ref{theta2}). The constant in this estimates depends on the $C^\alpha$ norm of $\theta(\eta)$.
		
		It follows that the operator $T(\theta, \gamma)$ maps a bounded set formed by functions $\theta$ in $C^\alpha([0,1])$ into a compact set formed by $\omega$ in $C^2([0,1])$. The continuity properties of $T(\theta, \gamma)$ required by the Leray-Schauder theorem follow from the continuity of the conditions that hold for the difference of the solutions of problem (\ref{theta1}), (\ref{theta2}) for different $\theta$ and $\gamma$, as well as from the estimates, which hold uniformly with respect to $\gamma$, for these solutions and their derivatives.
		
		Thus, the existence of a solution for problem (\ref{epsilon}), (\ref{xbd}) with $\varepsilon>0$ is a consequence of the Leray-Schauder theorem. 
		
		Now, for these solutions, let us prove uniform estimates from above. Set $H_1=K_2(1-\eta)\sigma$. For $\mu$, we can take any constant such that $\mu<1$ and $\sigma>1$ for $0\leq \eta<1$. We have
		\begin{align*}
			L_{\varepsilon,k}(H_1)=&-\varepsilon K_2(\frac{1}{2(1-\eta)\sigma}+\frac{1}{4(1-\eta)\sigma^3})-\nu K_2(1-\eta)\sigma (\frac{K^2_2}{2}+\frac{K^2_2}{4\sigma})\\
			&+(\eta^2-1)B^kK_2(-\sigma+\frac{1}{2\sigma})-\eta C^kK_2(1-\eta)\sigma\\
			\leq& -K_2(1-\eta)\sigma \left[\nu\frac{K^2_2}{2}+\nu\frac{K^2_2}{4\sigma}-(1+\eta)(1-\frac{1}{2\sigma^2})B^k+\eta C^k\right]\\
			<& 0
		\end{align*}
		for $0< \eta < 1$, provided that $K_2$ is suitably large and independent of $h, \varepsilon$. Further, we have
		\begin{equation}
			l_{\varepsilon,k}(H_1)=\left.\left(\nu K_2(-\sigma+\frac{1}{2\sigma})-v^k_1+\frac{B^k}{K_2\sigma}\right)\right|_{\eta=0}<0,
		\end{equation}
		if $\mu<e^{-1/2}$ and $K_2$ is chosen sufficiently large. The difference $S^k_1=\omega^k-H^k_1$ satisfies the inequalities
		\begin{equation}
			\begin{gathered}
				(\nu(\omega^k)^2+\varepsilon)S^k_{1\eta\eta}-\eta (A^k+\mu_k h)\frac{S^k_1-S^{k-1}_1}{h}+(\eta^2-1)B^kS^k_{1\eta}-\eta C^kS^k_1+\nu H^k_{1\eta\eta}(\omega^k+H^k_1)S^k>0,\\
				\left.\left(\nu S^k_{1\eta}-\frac{B^k}{H^k_1\omega^k}S^k_1\right)\right|_{\eta=0}>0.
			\end{gathered}
		\end{equation}
		These inequalities readily imply that $S^k_1\leq 0$ and $\omega^k\leq K_2(1-\eta)\sigma$ for $0\leq\eta\leq 1$ and $kh\leq X$, since $-\eta \frac{(A^k+\mu_k h)}{h}-\eta C^k+\nu H^k_{1\eta\eta}(\omega^k+H^k_1)<0$ and $-\frac{B^k}{V^k_1\omega^k}<0$.
		
		From equations (\ref{epsilon}) and the boundary conditions (\ref{bd6}) for $\eta=0$, in combination with the two-sided estimates for $\omega^k$, we obtain estimates for $\omega_\eta^k, \omega_{\eta \eta}^k$ which hold on any segment $0 \leq \eta \leq 1-\delta, \delta=\text{const} >0$, uniformly with respect to $\varepsilon$. Differentiating equation (\ref{epsilon}) with respect to $\eta$, we can show that the derivatives of $\omega_k$ up to any given order are bounded on the segment $0 \leq \eta \leq 1-\delta$, uniformly with respect to $\varepsilon$.
		
		Consequently, from the family of solutions $w^k$ of problem (\ref{epsilon}), (\ref{bd6}) depending on the parameter $\varepsilon$, $0<\varepsilon \leq 1$, we can extract a sequence $\omega^k$ such that $\omega^k$, together with their derivatives of any given order, are uniformly convergent on the segment $0 \leq \eta \leq 1-\delta$, as $\varepsilon\rightarrow 0$. Obviously, the estimates (\ref{VV}) hold for the limit functions $\omega^k$. Further the limit function is continuous on $0 \leq \eta \leq 1$, it vanishes for $\eta=1$ and satisfies (\ref{line}), (\ref{bd6}).
		
	\end{proof}

    \begin{lemma}\label{wk}
    	Assume $U(x)$ and $v_0(x)$ satisfy the conditions (\ref{assum}). Then, if $kh\leq X$ and $X>0$ is sufficiently small, problem (\ref{line}),(\ref{bd6}) has a solution with the following properties:
    	\begin{equation}\label{ine}
    		\begin{gathered}
    			Y(1-M_{11}kh)\leq \omega^k \leq Y(1+M_{12}kh),\\
    			|\frac{\omega^k-\omega^{k-1}}{h}|\leq M_{13}Y, \\
    			Y_{\eta}(1+M_{14}kh)\leq \omega^k_{\eta} \leq Y_{\eta}(1-M_{15}kh),\quad \text{for}\quad m\geq \frac{1}{3},\\
    			-M_{16} \sigma\leq \omega^k_{\eta} \leq -M_{17} \sigma,\quad \text{for}\quad 0<m<\frac{1}{3},\\
    			-M_{18}\leq \omega^k \omega^k_{\eta\eta}\leq -M_{19},
    		\end{gathered}
    	\end{equation}
    	where $\sigma(\eta)=\sqrt{-\ln \mu(1-\eta)}$, $\mu=\text{const}$, $0<\mu<1$, $\delta_1<\delta$ , $\delta$ is sufficiently small.
    \end{lemma}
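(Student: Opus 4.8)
The plan is to establish the five estimates in (\ref{ine}) simultaneously by induction on $k$, the base case $k=0$ being Theorem \ref{Y}, since (\ref{line}), (\ref{bd6}) reduce to (\ref{begin}), (\ref{bd4}) and $\omega^0=Y$. Throughout the inductive step $k$ plays the role of a discrete time for the degenerate parabolic operator $L_k$, and every estimate is closed by the discrete maximum principle: once a difference $S$ satisfies an identity of the type $\nu(\omega^k)^2S_{\eta\eta}+(\eta^2-1)B^kS_\eta+c(\eta)S=\eta(A^k+\mu_k h)\frac{S^{k-1}}{h}+F$ with $c\le 0$, with $F$ and $S^{k-1}$ of the right sign, and with cooperating boundary terms at $\eta=0,1$, the sign of $S$ is forced. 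The mechanism that makes these signs work is twofold: $\nu Y^2Y_{\eta\eta}=\nu Y(YY_{\eta\eta})\le-\nu M_{10}Y<0$ by Theorem \ref{Y}, so the degenerate diffusion is strictly negative; and for $k\ge1$ the line term contributes $-\eta A^k/h=-\eta kV(kh)$, which is precisely what compensates the unfavorable sign of $C^k=\frac{3m-1}{2}V+\xi V_\xi$ in the range $0<m<\frac13$.

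For the two-sided bound, which I read as $(1-M_{11}kh)Y\le\omega^k\le(1+M_{12}kh)Y$, I would take the comparison functions $W^{\pm,k}=(1\pm Mkh)Y$. Using $A^k=O(kh)$, $B^k=ma+O(kh)$, $C^k=\frac{3m-1}{2}a+O(kh)$ and cancelling the leading term by (\ref{begin}), one obtains
\[
L_k(W^{\pm,k})=\pm 2Mkh\,\nu Y^2Y_{\eta\eta}+O(kh)\big[(\eta^2-1)Y_\eta-\eta Y\big]\mp\eta(A^k+\mu_k h)MY ,
\]
whose sign is governed by the strictly negative term $\nu Y^2Y_{\eta\eta}\le-\nu M_{10}Y$; choosing $M=M_{12}$ (resp. $M=M_{11}$) large makes $W^{+}$ a supersolution and $W^{-}$ a subsolution. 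In the difference equation for $S^k=\omega^k-W^{\pm,k}$ the coefficient of $S^k$ is $c=\nu W^{\pm,k}_{\eta\eta}(\omega^k+W^{\pm,k})-\eta C^k-\eta(A^k+\mu_k h)/h$; the first summand is negative because $Y_{\eta\eta}<0$ and $\omega^k>0$ by (\ref{VV}), and for $0<m<\frac13$ the remaining part $-\eta(A^k/h+C^k)=-\eta V(k+\frac{3m-1}{2})-\eta khV_\xi$ is negative once $k\ge1$ and $kh$ is small. With (\ref{bd6}) at $\eta=0$ handled as in the previous lemma and $S^k(1)=0$, the maximum principle gives the bound.

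The difference-quotient bound $|(\omega^k-\omega^{k-1})/h|\le M_{13}Y$ cannot come from subtracting the one-sided bounds, which would lose a factor $k$; instead I would compare $\omega^k$ with the frozen profile $\omega^{k-1}$. Applying $L_k$ to $\omega^{k-1}$ and subtracting $L_{k-1}(\omega^{k-1})=0$ gives
\[
L_k(\omega^{k-1})=\eta(A^{k-1}+\mu_{k-1}h)\frac{\omega^{k-1}-\omega^{k-2}}{h}+(\eta^2-1)(B^k-B^{k-1})\omega^{k-1}_\eta-\eta(C^k-C^{k-1})\omega^{k-1},
\]
which by the induction hypothesis and the Lipschitz dependence of $B,C$ on $\xi$ is of order $O(X)M_{13}\eta Y+O(h)$. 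In the equation for $S=\omega^k-\omega^{k-1}$ the line term is now a genuine mass term $-\eta(A^k/h)S\sim-\eta kV S$ of size $O(X/h)$, and dividing the $O(X)$ forcing by this large coefficient yields $|S|\le M_{13}hY$. The derivative estimates then follow from the first-order equation $\nu(\omega^k)^2(\omega^k_\eta)_\eta+(\eta^2-1)B^k\omega^k_\eta=\eta C^k\omega^k+\eta(A^k+\mu_k h)(\omega^k-\omega^{k-1})/h$, whose right-hand side is now controlled: comparing $\omega^k_\eta$ with $(1\pm Mkh)Y_\eta$ when $m\ge\frac13$ and with $\mp\sigma$ when $0<m<\frac13$, using barriers suggested by the integral representation in the proof of Theorem \ref{Y}, gives the third and fourth lines of (\ref{ine}).

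Finally, for $-M_{18}\le\omega^k\omega^k_{\eta\eta}\le-M_{19}$ I would set $R^k=\omega^k\omega^k_{\eta\eta}$. The lower bound is immediate from
\[
\nu\,\omega^k\omega^k_{\eta\eta}=\eta(A^k+\mu_k h)\frac{\omega^k-\omega^{k-1}}{h\,\omega^k}+(1-\eta^2)B^k\frac{\omega^k_\eta}{\omega^k}+\eta C^k,
\]
all three terms being bounded by the estimates already proved. For the upper bound $R^k\le-M_{19}$ I would differentiate (\ref{line}) in $\eta$ to obtain a discrete analogue $Q(R^k)$ of the operator used in the proof of Theorem \ref{Y}, verify $Q(R^k+M_{19})>0$ for $M_{19}$ small, exhibit a sequence $\eta_n\to1$ along which $R^k<-M_{19}$, and combine this with the value of $R^k$ at $\eta=0$ from (\ref{bd6}), exactly as for $R=YY_{\eta\eta}$ in Theorem \ref{Y}. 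The main obstacle is pervasive and is the regime $0<m<\frac13$: there the zeroth-order and forcing terms carry the wrong sign and are controlled only through the strict negativity $YY_{\eta\eta}\le-M_{10}$ of the degenerate diffusion together with the line term $\eta A^k/h$, and since this line term degenerates as $\eta\to0$, the estimates near $\eta=0$ must be closed using the nonlinear boundary condition in (\ref{bd6}) rather than the interior maximum principle.
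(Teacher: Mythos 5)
Your skeleton---a simultaneous induction on $k$ closed by a discrete maximum principle, with barriers $Y(1\pm Mkh)$, $M_{13}Y$, $Y_\eta(1\pm Mkh)$, $\mp M\sigma$, the good signs coming from $YY_{\eta\eta}\le -M_{10}$ and, for $0<m<\frac{1}{3}$, from the line term---is exactly the paper's, and your treatment of the two-sided bound reproduces the paper's step with $\Phi^k=Y(1-M_{11}kh)$. The genuine gap is in the difference-quotient step. Note first that your ``frozen profile'' equation for $S=\omega^k-\omega^{k-1}$ is just the paper's equation for $r^k=S/h$ multiplied by $h$, so the structure is not new; what differs, and fails, is your closure mechanism ``the line term is a mass of size $O(X/h)$; divide the $O(X)$ forcing by it.'' It fails in two ways. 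First, that mass is $\eta(A^k+\mu_kh)/h\approx\eta(kV+\mu_k)$ and vanishes as $\eta\to 0$, while the forcing does not: $(\eta^2-1)(B^k-B^{k-1})\omega^{k-1}_\eta\to -(B^k-B^{k-1})\,\omega^{k-1}_\eta(0)\ne 0$; and the Robin condition (\ref{bd6}) only excludes an extremum at the single point $\eta=0$, it cannot close the comparison on a neighbourhood $(0,\epsilon)$, so the fix you propose is inadequate. Second, the coefficient $\eta(k-1)a$ multiplying the inductively bounded $r^{k-1}$ differs from the line mass $\eta ka$ only by $\eta a$, and a crude ratio bound loses this gap to the $O(kh)$ perturbations of $V$ (the relative error accumulates like $k^2h\to\infty$ as $h\to 0$); one must use the exact difference $A^k-A^{k-1}=h\left(a+O(X)\right)$ instead. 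What actually closes this step in the paper is an ingredient you never invoke: by the level-$(k-1)$ equation, the zeroth-order coefficient in the equation for $r^k$ (equivalently $S$) contains $\nu(\omega^k+\omega^{k-1})\omega^{k-1}_{\eta\eta}$, and the induction hypothesis $\omega^{k-1}\omega^{k-1}_{\eta\eta}\le -M_{19}$---the fifth line of (\ref{ine}) at level $k-1$---makes this $\le -\nu M_{19}<0$ uniformly in $\eta$, in particular near $\eta=0$; that is what absorbs the non-degenerate forcing there and the $O(X)$ errors elsewhere. This is precisely why the five estimates must be carried in one induction; you announce a simultaneous induction but never use the fifth hypothesis in any of the first four steps.

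Two further omissions. For the bounds on $z^k=\omega^k_\eta$ there is no boundary condition at $\eta=1$ and $\sigma\to\infty$ there; the paper obtains boundary control by noting that the already-proved squeeze $Y(1-M_{11}kh)\le\omega^k\le Y(1+M_{12}kh)$, with all three functions vanishing at $\eta=1$, forces the existence of sequences $\eta_n',\eta_n''\to 1$ along which $z^k\le Y_\eta(1-M_{11}kh)$, respectively $z^k\ge Y_\eta(1+M_{12}kh)$, and then runs the maximum principle on $[0,\eta_n]$. Your sketch invokes such a sequence only for the last estimate, leaving the third and fourth lines of (\ref{ine}) with no control at $\eta=1$, where your barriers $Y_\eta(1\pm Mkh)$ and $\mp M\sigma$ blow up. Finally, the constants $\mu_k$ in (\ref{line}) are not decorative and must be chosen in this proof: in the estimate for $z^k$ the zeroth-order coefficient contains $\eta(2B^k-C^k)=\eta\frac{m+1}{2}a+O(kh)$, which for $m>1$ and $k=1$ exceeds $\eta ka$, so its negativity requires $\mu_k$ large; your proposal carries $\mu_k$ through the formulas but never fixes it or explains its purpose.
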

    
    \begin{proof}
    	The inequalities will be prove by induction. We have established these inequalities for $k=0$ in Theorem \ref{Y}. Let us show that there exist constants $M_{11},\cdots,M_{19}$ such that these inequalities being valid for $k-1$ imply their validity for $k,k\geq 1$, if $kh\leq X$ and $X$ is independent of $k$. Set
    	\begin{equation}
    		\Phi^k=Y(1-M_{11}kh).
    	\end{equation}
    	
    	Then
    	\begin{equation}
    		\begin{aligned}
    			L^k(\Phi)&=\nu Y^2Y_{\eta\eta}(1-M_{11}kh)^3+\eta (A^k+\mu_k h)M_{11}Y+(\eta^2-1)B^kY_{\eta}(1-M_{11}kh)\\
    			&\qquad -\eta C^kY(1-M_{11}kh)\\
    			&=L(Y)(1-M_{11}kh)+\nu Y^2Y_{\eta\eta}[(1-M_{11}kh)^2-1](1-M_{11}kh)\\
    			&\qquad  +\eta M_{11}Y(kh(a+a_1^k)+\mu_k h)+(\eta^2-1)Y_{\eta}(ma_1^k+kha_{1\xi}^k)(1-M_{11}kh)\\
    			&\qquad  -\eta Y(\frac{3m-1}{2}a_1^k+kha_{1\xi}^k)(1-M_{11}kh)\\
    			&\geq 2M_{11}M_{10}Ykh + \eta M_{11}Y(akh+\mu_k h)+(\eta^2-1)Y_{\eta}(ma_1^k+kha_{1\xi}^k)\\
    			&\qquad -\eta Y(\frac{3m-1}{2}a_1^k+kha_{1\xi}^k)-N_1(kh)^2\\
    			&>0,
    		\end{aligned}
    	\end{equation}
    	for $0<\eta<1$, $kh\leq X_1$, where $M_{11}$ is sufficiently large and independent of $h,k$.
    	
    	Setting $l_k(\omega):=\left.\left(\nu \omega_{\eta}-v_1^k+\frac{B^k}{\omega^k}\right)\right|_{\eta=0}$, we obtain
    	\begin{equation}
    		\begin{aligned}
    			l_k(\Phi)&=(\nu Y_{\eta}(1-M_{11}kh)-(b+b_1^k)+\frac{B^k}{Y(1-M_{11}kh)})|_{\eta=0}\\
    			&=-\nu M_{11}Y_{\eta}(0)kh-b_1^k+\frac{maM_{11}kh}{Y(0)(1-M_{11}kh)}+\frac{ma_1^k+kha_{1\xi}^k}{Y(0)(1-M_{11}kh)}\\
    			&>0,
    		\end{aligned}
    	\end{equation}
    	if $kh\leq X_1$ and $M_{11}$ is sufficiently large.
    	
    	Consider the functions $s^k=\omega^k-\Phi^k$. we have 
    	\begin{equation}\label{s1}
    		\nu(\omega^k)^2s^k_{\eta\eta}-\eta (A^k+\mu_k h)\frac{s^k-s^{k-1}}{h}+(\eta^2-1)B^ks^k_{\eta}-\eta C^ks^k+\Phi^k_{\eta\eta}(\Phi^k+\omega^k)s^k<0,
    	\end{equation}
    	for $0<\eta<1$, $kh\leq X_1$. We also have
    	\begin{equation}\label{s2}
    		s^k(1)=0,\quad \left.\left(\nu s^k_{\eta}-\frac{B^k}{\Phi^k\omega^k}s^k\right)\right|_{\eta=0}<0,
    	\end{equation}
    	for $kh\leq X_1$. And we have
    	\begin{equation}
    		\begin{gathered}
    			-\eta \frac{(A^k+\mu_k h)}{h}-\eta C^k+\Phi^k_{\eta\eta}(\Phi^k+\omega^k)
    			\leq -\eta ka-\eta\mu_k-\eta \frac{3m-1}{2}a+\eta N_2kh\leq 0,\\
    			-\frac{B^k}{\Phi^k\omega^k}<0.
    		\end{gathered}	
    	\end{equation}
    	
    	Since the coefficients of $s^k$ in (\ref{s1}) and (\ref{s2}) are negative for $kh\leq X_1$, we can obtain $s^k\geq 0$, i.e. $\omega^k\geq Y(1-M_{11}kh)$ for $0\leq\eta\leq1$ and $kh\leq X_1$. In a similar way we show that 
    	\begin{equation}
    		\omega^k\leq Y(1+M_{12}kh),
    	\end{equation}
    	for $0\leq\eta\leq1$ and $kh\leq X_2$, where $M_{12}$ is sufficiently large and independent of $h,k$.
    	
    	Set
    	\begin{equation}
    		\omega^k_{\eta}=z^k,\quad \frac{w^k-w^{k-1}}{h}=r^k.
    	\end{equation}
    	Let us write out the equations for $z^k$ and $r^k$, $k\geq 1$. Differentiating (\ref{line}) with respect to $\eta$, we get
    	\begin{equation}
    		\begin{aligned}
    			P_k(z):=&\nu (\omega^k)^2z^k_{\eta\eta}+(\eta^2-1)B^kz^k_{\eta}+\eta (2B^k-C^k)z^k-\eta (A^k+\mu_k h)\frac{z^k-z^{k-1}}{h}\\
    			&\quad +2\nu \omega^kz^kz^k_{\eta}-(A^k+\mu_k h)r^k-C^k\omega^k=0.
    		\end{aligned}
    	\end{equation}
    	
    	From the boundary condition (\ref{bd6}) we get
    	\begin{equation}
    		\left.\left(\nu z^k-v_1^k+\frac{B^k}{w^k}\right)\right|_{\eta=0}=0.
    	\end{equation}
    	
    	Subtracting from the equation (\ref{line}) for $\omega^k$ that for $\omega^{k-1}$ and dividing the result by $h$, we obtain
    	\begin{equation}
    		\begin{aligned}
    			&\nu (\omega^k)^2r^k_{\eta\eta}+(\eta^2-1)B^kr^k_{\eta}-\eta (A^{k-1}+\mu_{k-1}h)\frac{r^k-r^{k-1}}{h}-\eta C^kr^k-\eta \left(\frac{A^k-A^{k-1}}{h}+(\mu_k-\mu_{k-1})\right) r^k\\
    			&+\frac{\omega^k+\omega^{k-1}}{(\omega^{k-1})^2}\left(\eta (A^{k-1}+\mu_{k-1} h)r^{k-1}+(1-\eta^2)B^{k-1}z^{k-1}+\eta C^{k-1}\omega^{k-1}\right)r^k\\
    			&+(\eta^2-1)\frac{B^k-B^{k-1}}{h}z^{k-1}-\eta \frac{C^k-C^{k-1}}{h}\omega^{k-1}=0.
    		\end{aligned}
    	\end{equation}
    	
    	Similarly, from (\ref{bd6}) we obtain 
    	\begin{equation}
    		r^k(1)=0,\quad \left.\left(\nu r^k_{\eta}-\frac{B^k}{\omega^k\omega^{k-1}}r^k-\frac{v^k_1-v^{k-1}_1}{h}+\frac{B^k-B^{k-1}}{h\omega^{k-1}}\right)\right|_{\eta=0}=0.
    	\end{equation}
    	
    	Let us introduce the following functions:
    	\begin{equation}
    		\begin{aligned}
    			R_k(\varphi):=&\nu (\omega^k)^2\varphi^k_{\eta\eta}+(\eta^2-1)B^k\varphi^k_{\eta}-\eta (A^{k-1}+\mu_{k-1}h)\frac{\varphi^k-\varphi^{k-1}}{h}\\
    			&\quad-\eta C^k\varphi^k-\eta \left(\frac{A^k-A^{k-1}}{h}+(\mu_k-\mu_{k-1})\right)\varphi^k\\
    			&\quad+\frac{\omega^k+\omega^{k-1}}{(\omega^{k-1})^2}\left(\eta (A^{k-1}+\mu_{k-1} h)r^{k-1}+(1-\eta^2)B^{k-1}z^{k-1}+\eta C^{k-1}\omega^{k-1}\right)\varphi^k,\\
    			\rho_k(\varphi):=&\left.\left(\nu \varphi^k_{\eta}-\frac{B^k}{\omega^k\omega^{k-1}}\varphi^k\right)\right|_{\eta=0}.
    		\end{aligned}
    	\end{equation}
    	
    	Setting $\varphi^k=M_{13}Y$, we find that
    	\begin{equation}
    		\begin{aligned}
    			R_k(\varphi)=&M_{13}\{\nu (\omega^k)^2Y_{\eta\eta}+(\eta^2-1)B^kY_{\eta}-\eta C^kY-\eta \left(\frac{A^k-A^{k-1}}{h}+(\mu_k-\mu_{k-1})\right)Y \\
    			&\quad +\frac{\omega^k+\omega^{k-1}}{(\omega^{k-1})^2}\left(\eta (A^{k-1}+\mu_{k-1} h)r^{k-1}+(1-\eta^2)B^{k-1}z^{k-1}+\eta C^{k-1}\omega^{k-1}\right)Y\}\\
    			=&M_{13}\{L(Y)+\nu [(\omega^k)^2-Y^2]Y_{\eta\eta}+(\eta^2-1)(ma^k_1+kha^k_{1\xi})Y_{\eta}\\
    			&\quad-\eta (\frac{3m-1}{2}a^k_1+kha^k_{1\xi})Y-\eta \left(\frac{A^k-A^{k-1}}{h}+(\mu_k-\mu_{k-1})\right)Y\\
    			&\quad+\frac{\omega^k+\omega^{k-1}}{(\omega^{k-1})^2}\left(\eta (A^{k-1}+\mu_{k-1} h)r^{k-1}+(1-\eta^2)B^{k-1}z^{k-1}+\eta C^{k-1}\omega^{k-1}\right)Y \}.
    		\end{aligned}
    	\end{equation}
    	And we have 
    	\begin{equation}
    		R_k(\varphi)+\left|(\eta^2-1)\frac{B^k-B^{k-1}}{h}z^{k-1}-\eta \frac{C^k-C^{k-1}}{h}\omega^{k-1}\right|<0,
    	\end{equation}
    	for $0<\eta<1$, provided $kh\leq X_3$ and $M_{13}$ is sufficiently large and does not depend on $k,h$. This is possible, since
    	\begin{equation}
    		\begin{gathered}
    			M_{13}\frac{\omega^k+\omega^{k-1}}{(\omega^{k-1})^2}(1-\eta^2)B^{k-1}z^{k-1}Y+\left|(\eta^2-1)\frac{B^k-B^{k-1}}{h}z^{k-1}\right|\leq 0,\\
    			M_{13}\frac{\omega^k+\omega^{k-1}}{(\omega^{k-1})^2}\eta C^{k-1}\omega^{k-1}Y+\left|-\eta \frac{C^k-C^{k-1}}{h}\omega^{k-1}\right|\leq 0,\\
    			\left|\nu [(\omega^k)^2-Y^2]Y_{\eta\eta}+(\eta^2-1)(ma^k_1+kha^k_{1\xi})Y_{\eta}-\eta (\frac{3m-1}{2}a^k_1+kha^k_{1\xi})Y\right|\leq C_1Ykh,\\
    			\frac{\omega^k+\omega^{k-1}}{(\omega^{k-1})^2}\left(\eta (A^{k-1}+\mu_{k-1} h)r^{k-1}+(1-\eta^2)B^{k-1}z^{k-1}+\eta C^{k-1}\omega^{k-1}\right)Y\leq -C_2Y.
    		\end{gathered}
    	\end{equation}
    	
    	Let us calculate $\rho_k(\varphi)$. We have
    	\begin{equation}
    		\rho_k(\varphi)=\left. M_{13}\left(\nu Y_{\eta}-\frac{B^k}{\omega^k\omega^{k-1}}Y\right)\right|_{\eta=0}<-M_{13}\gamma,
    	\end{equation}
    	where $\gamma$ is a positive constant. It is easy to see that
    	\begin{equation}
    		\rho_k(\varphi)+\left.\left|-\frac{v^k_1-v^{k-1}_1}{h}+\frac{B^k-B^{k-1}}{h\omega^{k-1}}\right|\right|_{\eta=0}<0,
    	\end{equation}
    	if $kh\leq X_3$ and $M_{13}$ is sufficiently large and does not depend on $k,h$.
    	
    	Consider the functions $\phi^k_{\pm}=\varphi^k\pm r^k$. We have
    	\begin{equation}
    		\begin{aligned}\label{s3}
    			R_k(\phi_{\pm})=&\nu (\omega^k)^2\phi^k_{\pm\eta\eta}+(\eta^2-1)B^k\phi^k_{\pm\eta}-\eta A^{k-1}\frac{\phi^k_{\pm}-\phi^{k-1}_{\pm}}{h}\\
    			&+\left[(\omega^k+\omega^{k-1})\nu\omega^{k-1}_{\eta \eta}-\eta C^k-\eta \frac{A^k-A^{k-1}}{h}\right]\phi^k_{\pm}<0,\\
    		\end{aligned}
    	\end{equation}
    	for $0<\eta<1$, if $kh\leq X_3$. We also have
    	\begin{equation}\label{s4}
    		\rho_k(\phi_{\pm})=\left.\left(\nu \phi^k_{\pm\eta}-\frac{B^k}{\omega^k\omega^{k-1}}\phi^k_{\pm}\right)\right|_{\eta=0}<0,\quad \phi^k_{\pm}(1)=0,
    	\end{equation}
    	if $kh\leq X_3$. And we have
    	\begin{equation}
    		\begin{aligned}
    			&\frac{\omega^k+\omega^{k-1}}{(\omega^{k-1})^2}\left(\eta (A^{k-1}+\mu_{k-1} h)r^{k-1}+(1-\eta^2)B^{k-1}z^{k-1}+\eta C^{k-1}\omega^{k-1}\right)\\
    			&\quad-\eta C^k-\eta \left(\frac{A^k-A^{k-1}}{h}+(\mu_k-\mu_{k-1})\right)-\eta \frac{(A^{k-1}+\mu_{k-1}h)}{h}\\
    			\leq &-\eta \frac{3m-1}{2}a-\eta a-\eta (k-1)a-\eta \mu_k+\eta N_3kh\leq 0,
    		\end{aligned}
    	\end{equation}
    	for $kh\leq X_3$. Since the coefficients of $\phi^k_{\pm}$ in (\ref{s3}) and (\ref{s4}) are negative for $kh\leq X_3$, we can obtain $\phi^k_{\pm}\geq 0$, i.e. $\left|\frac{\omega^k-\omega^{k-1}}{h}\right|\leq M_{13}Y$ for $0\leq\eta\leq1$ and $kh\leq X_3$. 
    	
    	Let us estimate $z^k=w^k_{\eta}$ when $m\geq \frac{1}{3}$. Setting $F^k_1=Y_{\eta}(1+M_{14}kh)$, we find that 
    	\begin{equation}
    		\begin{aligned}
    			P_k(F_1)=&\nu (\omega^k)^2Y_{\eta\eta\eta}(1+M_{14}kh)+(\eta^2-1)B^kY_{\eta\eta}(1+M_{14}kh)+\eta(2B^k-C^k)Y_{\eta}(1+M_{14}kh)\\
    			&\quad-\eta (A^k+\mu_k h)Y_{\eta}M_{14}+2\omega^kY_{\eta}Y_{\eta\eta}(1+M_{14}kh)^2-(A^k+\mu_k h)r^k-C^k\omega^k\\
    			=&(1+M_{14}kh)[(L(Y))_{\eta}+((\omega^k)^2-Y^2)Y_{\eta\eta\eta}+(\eta^2-1)(ma^k_1+kha^k_{1\xi})Y_{\eta\eta}\\
    			&\quad+\eta(2ma^k_1+2kha^k_{1\xi}-\frac{3m-1}{2}a^k_1-kha^k_{1\xi})Y_{\eta}]-\eta (A^k+\mu_k h)M_{14}Y_{\eta}\\
    			&\quad+2Y_{\eta}Y_{\eta\eta}(\omega^k(1+M_{14}kh)-Y)(1+M_{14}kh)-(A^k+\mu_k h)r^k\\
    			&\quad-C^k\omega^k+\frac{3m-1}{2}aY(1+M_{14}kh).
    		\end{aligned}	
    	\end{equation}
    	Where
    	\begin{equation}
    		(L(Y))_{\eta}=\nu Y^2Y_{\eta\eta\eta}+2\nu YY_{\eta}Y_{\eta\eta}+(\eta^2-1)maY_{\eta\eta}+2\eta maY_{\eta}-\eta \frac{3m-1}{2}aY_{\eta}-\frac{3m-1}{2}aY=0.
    	\end{equation}
    	It follows that $|Y^2Y_{\eta\eta\eta}|\leq C_3|Y_\eta|$. And we have
    	\begin{align*}
    			&\left|(1+M_{14}kh)[((\omega^k)^2-Y^2)Y_{\eta\eta\eta}+(\eta^2-1)(ma^k_1+kha^k_{1\xi})Y_{\eta\eta}\right.\\
    			&\left. +\eta(2ma^k_1+2kha^k_{1\xi}-\frac{3m-1}{2}a^k_1-kha^k_{1\xi})Y_{\eta}]\right|\leq C_4|Y_{\eta}|kh,\\
    			&2Y_{\eta}Y_{\eta\eta}(\omega^k(1+M_{14}kh)-Y)(1+M_{14}kh)\geq 2C_5|Y_{\eta}|(M_{14}-M_{11})kh,\\
    			&-(A^k+\mu_k h)r^k-C^k\omega^k+\frac{3m-1}{2}aY(1+M_{14}kh)\geq C_6Y(M_{14}-C_7)kh.
    	\end{align*}
    	
    	Then $P_k(F_1)>0$ for $0<\eta<1$ and $kh\leq X_4$, where $X_4$ is sufficiently small, if $M_{14}$ is chosen large enough.
    	
    	It follows the first inequality of (\ref{ine}) that there exist sequences $\eta'_n\rightarrow 1, \eta''_n\rightarrow 1 (n\rightarrow\infty)$ such that 
    	\begin{equation}
    		\begin{gathered}
    			z^k|_{\eta=\eta'_n}\leq Y_{\eta}(\eta'_n)(1-M_{11}kh),\\
    			z^k|_{\eta=\eta''_n}\geq Y_{\eta}(\eta''_n)(1+M_{12}kh).
    		\end{gathered}
    	\end{equation}
     	Let $M_{14}>M_{12}$. Consider the function $y^k=z^k-F^k_1$. Then we have $y^k|_{\eta=\eta''_n}> 0$. It is easy to see that
    	\begin{equation}
    		y^k(0)=\left.\left[-Y_\eta M_{14}kh+\frac{v_1^k}{\nu}+\frac{1}{\nu}\left(\frac{ma}{Y}-\frac{B^k}{\omega^k}\right)\right]\right|_{\eta=0}\geq 0
    	\end{equation}
        for large enough $M_{14}$.
    	
    	For $0<\eta<1$, the function $y^k$ satisfies the following relation:
    	\begin{equation}
    		\begin{aligned}
    			P_k(z)-P_k(F_1)=&\nu (\omega^k)^2y^k_{\eta\eta}+(\eta^2-1)B^ky^k_{\eta}+\eta(2B^k-C^k)y^k\\
    			&-\eta (A^k+\mu_k h)\frac{y^k-y^{k-1}}{h}+2\omega^kz^ky^k_{\eta}+2\omega^kF^k_{1\eta}y^k<0.
    		\end{aligned}
    	\end{equation}
    	We see that the coefficient of $y^k$ is negative since
    	\begin{equation}
    		\eta(2B^k-C^k)-\eta\frac{(A^k+\mu_k h)}{h}+2\omega^kF^k_{1\eta}\leq \eta\frac{m+1}{2}a-\eta ka-\eta \mu_k+\eta N_4 kh\leq 0.
    	\end{equation}
        We need to choose $\mu_k$ large enough when $m\geq 1$. Therefore $y^k\geq 0$, i.e. $\omega^k_{\eta}\geq Y_{\eta}(1+M_{14}kh)$, for $0<\eta<1$, since $\eta''_n\rightarrow 1$ as $n\rightarrow \infty$. In a similar way, we show that 
    	\begin{equation}
    		\omega^k_{\eta}\leq Y_{\eta}(1-M_{15}kh)
    	\end{equation}
    	for $kh\leq X_5$ and $0<\eta<1$, where $M_{15}$ is sufficiently large and independent of $h,k$. 
    	
    	Next, let us estimate $z^k=w^k_{\eta}$ when $0<m< \frac{1}{3}$. Setting $G_1^k=-M_{16}\sigma$, we find that
    	\begin{equation}
    		\begin{aligned}
    			P_k(G_1)=&-M_{16}\nu(\omega^k)^2\left(\frac{1}{2\sigma(1-\eta)^2}-\frac{1}{4\sigma^3(1-\eta)^2}\right)-M_{16}(\eta^2-1)B^k\frac{1}{2\sigma(1-\eta)}\\
    			&\quad -M_{16}\eta (2B^k-C^k)\sigma+\nu\omega^kM_{16}^2\frac{1}{1-\eta}-(A^k+\mu_kh)r^k-C^k\omega^k\\
    			\geq& -\frac{\nu M_{16}M_6^2\sigma}{2}(1+M_{12}kh)^2+\frac{\nu M_{16}M_6^2}{4\sigma}(1+M_{12}kh)^2+M_{16}(1+\eta)\frac{B^k}{2\sigma}\\
    			&\quad-M_{16}\eta (2B^k-C^k)\sigma+\nu M_{16}^2M_5\sigma(1-M_{11}kh)-(A^k+\mu_kh)r^k-C^k\omega^k\\
    			>&0.
    		\end{aligned}
    	\end{equation}
        for $kh\leq X_6$, provided that $M_{16}$ is suitably large. Let us choose $M_{16}$ to be independent of $k,h$ and such that $z^k-G^k_1\geq 0$ for $\eta=\eta''_n$, and moreover, $z^k-G^k_1\geq 0$ for $\eta=0$.
        
        For the difference $s_1^k=z^k-G^k_1$, $0\leq \eta<1$, we obtain the following inequality
        \begin{equation}
        	\begin{aligned}
        		&\nu (\omega^k)^2s^k_{1\eta\eta}+(\eta^2-1)B^ks^k_{1\eta}+\eta(2B^k-C^k)s_1^k\\
        		&-\eta (A^k+\mu_k h)\frac{s_1^k-s_1^{k-1}}{h}+2\omega^kz^ks^k_{1\eta}+2\omega^kG^k_{1\eta}s_1^k<0.
        	\end{aligned}
        \end{equation}
        Moreover,
        \begin{equation}
        	s_1^k(\eta''_n)\geq 0,\quad s_1^k(0)\geq 0.
        \end{equation}
    
        The coefficient of $s_1^k$ is equal to
        $$
        \eta(2B^k-C^k)-\eta \frac{A^k+\mu_k h}{h}+2\omega^kG^k_{1\eta}<0.
        $$
        Therefore $s_1^k\geq 0$, i.e. $\omega^k_{\eta}\geq -M_{16}\sigma$, for $0<\eta<1$, since $\eta''_n\rightarrow 1$ as $n\rightarrow \infty$.
        
    	Now, let us estimate $z^k$ from above.  It follows from (\ref{line}) that
    	\begin{equation}
    		\Lambda_k(z)=\nu (\omega^k)^2z^k_\eta+(\eta^2-1)B^kz^k=\eta(A^k+\mu_kh)r^k+\eta C^k\omega^k\leq 0.
    	\end{equation}
        Therefore, it is easy to obtain $z^k$ is bounded from above for $0\leq \eta\leq 1-\delta, \forall\delta>0$ by the Gronwall theorem, as well as from the estimate for $z^k$ at $\eta=0$ obtained from the boundary condition (\ref{bd6}). This is possible, since
        \begin{equation}
        	\begin{aligned}
        		z^k(\eta)&=\left(z^k(0)+\int_{0}^{\eta}J(\tau)e^{\int_{0}^{\tau}I(s)ds}d\tau\right)e^{-\int_{0}^{\eta}I(s)ds}\\
        		&\leq z^k(0),
        	\end{aligned}
        \end{equation} 
        where we have
        \begin{equation}
        	\begin{gathered}
        		I(\eta)=\frac{(\eta^2-1)B^k}{\nu (\omega^k)^2}\leq 0,\\
        		J(\eta)=\frac{\eta(A^k+\mu_kh)r^k+\eta C^k\omega^k}{\nu (\omega^k)^2}\leq 0.
        	\end{gathered}
        \end{equation}
        
        Next, let us estimate $z^k$ from above in the neighborhood of $\eta=1$. Setting $G^k_2=-M_{20}\sigma$, we obtain the following inequality
    		\begin{align*}
    			P_k(G_2)&=-M_{20}\nu(\omega^k)^2\left(\frac{1}{2\sigma(1-\eta)^2}-\frac{1}{4\sigma^3(1-\eta)^2}\right)-M_{20}(\eta^2-1)B^k\frac{1}{2\sigma(1-\eta)}\\
    			&\quad -M_{20}\eta (2B^k-C^k)\sigma+\nu\omega^kM_{20}^2\frac{1}{1-\eta}-(A^k+\mu_kh)r^k-C^k\omega^k\\
    			\leq& -\frac{\nu M_{20}M_5^2\sigma}{2}(1-M_{11}kh)^2+\frac{\nu M_{20}M_5^2}{4\sigma}(1-M_{11}kh)^2+M_{20}(1+\eta)\frac{B^k}{2\sigma}\\
    			&\quad-M_{20}\eta (2B^k-C^k)\sigma+\nu M_{20}^2M_6\sigma(1+M_{12}kh)-(A^k+\mu_kh)r^k-C^k\omega^k\\
    			<&0.
    		\end{align*}
        for $1-\delta_1 \leq \eta<1$ and $kh\leq X_7$, provide that $M_{20}$ and $\delta_1$ are sufficiently small ($M_{20}$ and $\delta_1$ are independent of $k,h$). 
        
        Consider the difference $s_2^k=z^k-G^k_2$. For $1-\delta_1 \leq \eta<1$ and $kh\leq X_7$, the function $s_2^k$ satisfies the following inequalities
        \begin{equation}
        	\begin{aligned}
        		&\nu (\omega^k)^2s^k_{2\eta\eta}+(\eta^2-1)B^ks^k_{2\eta}+\eta(2B^k-C^k)s_2^k\\
        		&\quad-\eta (A^k+\mu_k h)\frac{s_2^k-s_2^{k-1}}{h}+2\omega^kz^ks^k_{2\eta}+2\omega^kG^k_{2\eta}s_2^k>0.\\
        		&s_2^k\leq 0,\quad \text{for}\quad \eta=1-\delta_1,\\
        		&s_2^k\leq 0,\quad \text{for}\quad \eta=\eta'_n.
        	\end{aligned}
        \end{equation}
    
        Note the coefficient of $s_2^k$ is negative. Then $s_2^k\leq 0$, i.e. $\omega^k_{\eta}\leq -M_{20}\sigma$, for $1-\delta \leq \eta<1$, since $\eta'_n\rightarrow 1$ as $n\rightarrow \infty$. Choosing suitable $M_{17}$, we have $\omega^k_{\eta}\leq -M_{17}\sigma$, for $0 \leq \eta<1$ and $kh\leq X_7$.
    
    	It follows from (\ref{line}) that
    	\begin{equation}
    		\nu \omega^k\omega^k_{\eta\eta}=\eta C^k+(1-\eta^2)B^k\frac{\omega^k_{\eta}}{\omega^k}+\eta (A^k+\mu_k h)\frac{r^k}{\omega^k}.
    	\end{equation}
    	And we have
    	\begin{equation}
    		\begin{aligned}
    			|\omega^k\omega^k_{\eta\eta}|&\leq M_{18},\\
    			\nu \omega^k\omega^k_{\eta\eta}&\leq \eta\frac{3m-1}{2}a+(1-\eta^2)ma\frac{\omega^k_{\eta}}{\omega^k}+N_5 kh\\
    			&\leq - M_{19},\quad\text{for}\quad 0<m<\frac{1}{3},\\
    			\nu \omega^k\omega^k_{\eta\eta}&=(1-\eta^2)ma\frac{Y_\eta}{Y}+\eta C^k+\eta (A^k+\mu_k h)\frac{r^k}{\omega^k}+(1-\eta^2)ma\left(\frac{\omega_{\eta}^k}{\omega^k}-\frac{Y_{\eta}}{Y}\right)\\
    			&\leq \nu YY_{\eta\eta}+N_6kh\\
    			&\leq -M_{19},\quad\text{for}\quad m\geq\frac{1}{3}.
    		\end{aligned}
    	\end{equation}
    	for $kh\leq X_8$ and $0\leq \eta<1$, where $X_8$ is sufficiently small. Thus, in order to complete the proof, it suffices to take $\mathop{min}\limits_{1 \leq i \leq 8} X_i$.

    \end{proof} 

    Next, on the basis of Lemma \ref{wk}, we establish the following existence theorem the problem (\ref{qy}), (\ref{bd3}).
    
    \begin{theorem}\label{nonline}
    	Assume that $U(x)$ and $v_0(x)$ satisfy the conditions (\ref{assum}). Then, in the domain $\varOmega$, with $X$ depending on $U(x)$ and $v_0(x)$, problem (\ref{qy}),(\ref{bd3}) has a unique classical solution $\omega(\xi,\eta)$ which is positive for $\eta<1$ and has the following properties: 
    	\begin{equation}
    		\begin{gathered}
    			Y(\eta)(1-M_{11}\xi)\leq \omega \leq Y(\eta)(1+M_{12}\xi),\\
    			|\omega_{\xi}(\xi,\eta)|\leq M_{13}Y(\eta),\\
    			Y_{\eta}(1+M_{14}\xi)\leq \omega_{\eta} \leq Y_{\eta}(1-M_{15}\xi),\quad \text{for} \quad m\geq \frac{1}{3},\\
    			-M_{16} \sigma\leq \omega_{\eta} \leq -M_{17} \sigma,\quad \text{for}\quad 0<m<\frac{1}{3},\\
    			-M_{18}\leq \omega \omega_{\eta\eta}\leq -M_{19},
    		\end{gathered}
    	\end{equation}
    	where $\sigma(\eta)=\sqrt{-\ln \mu(1-\eta)}$, $\mu=\text{const}$, $0<\mu<1$.
    \end{theorem}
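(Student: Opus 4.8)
The plan is to obtain the solution as the limit, as $h\to 0$, of the line-method approximations $\omega^k$ furnished by Lemma \ref{wk}, and then to prove uniqueness by a maximum-principle argument applied to the difference of two solutions. Because \emph{every} estimate in (\ref{ine}) is uniform in $h$, the passage to the limit is essentially a compactness argument; the genuinely new work lies in (i) verifying that the limit is a \emph{classical} solution of (\ref{qy}), (\ref{bd3})---smooth up to $\eta=0$ and merely continuous at the degenerate edge $\eta=1$---and (ii) the uniqueness.

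First I would assemble from the discrete profiles $\omega^k(\eta)$ a function $\omega_h(\xi,\eta)$ on $\varOmega$ by linear interpolation in $\xi$ on each strip $(k-1)h\le\xi\le kh$. The difference-quotient bound $|(\omega^k-\omega^{k-1})/h|\le M_{13}Y$ then gives $|\partial_\xi\omega_h|\le M_{13}Y$, i.e. equicontinuity in $\xi$ uniform in $h$; the bounds on $\omega^k$ and $\omega^k_\eta$, the bound on $\omega^k_{\eta\eta}$ obtained from $\omega^k\omega^k_{\eta\eta}$ together with the lower bound $\omega^k\ge K_1(1-\eta)$, and the bounds on all higher $\eta$-derivatives coming from differentiating the ordinary differential equation (\ref{line}), give equicontinuity in $\eta$, uniform in $h$, on every strip $0\le\eta\le1-\delta$. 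By Arzel\`a--Ascoli I extract a subsequence $\omega_h\to\omega$ converging with its $\eta$-derivatives uniformly on $\{0\le\eta\le1-\delta\}$ for each $\delta>0$. Solving (\ref{line}) algebraically for the difference quotient shows $(\omega^k-\omega^{k-1})/h$ converges (the $\mu_kh$ terms vanish) and identifies its limit with $\omega_\xi$, so $\omega$ satisfies (\ref{qy}) in $\{0<\xi<X,\,0<\eta<1\}$ together with the boundary conditions (\ref{bd3}); moreover every inequality of (\ref{ine}) passes to the limit with $kh$ replaced by $\xi$, yielding the asserted estimates and, via $\omega\ge Y(\eta)(1-M_{11}\xi)$, positivity for $\eta<1$.

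For the regularity I would use that, since $\omega\ge K_1(1-\eta)>0$, the coefficient $\nu\omega^2$ is bounded below by $\nu K_1^2\delta^2$ on each $\{0\le\eta\le1-\delta\}$, so (\ref{qy}) is uniformly parabolic there; interior parabolic Schauder estimates together with the oblique boundary condition (\ref{bd3}) at $\eta=0$ then bootstrap $\omega$ to $C^\infty$ for $0\le\eta<1$, $0<\xi<X$, while the upper bound $\omega\le K_2(1-\eta)\sigma\to 0$ gives continuity up to $\eta=1$. For uniqueness I would take two solutions $\omega_1,\omega_2$; both reduce at $\xi=0$ to the unique profile $Y$ of Theorem \ref{Y} (forced by the two-sided estimate), so $w=\omega_1-\omega_2$ vanishes on $\{\xi=0\}$ and, by (\ref{bd3}), on $\{\eta=1\}$. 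Writing $\omega_1^2-\omega_2^2=(\omega_1+\omega_2)w$, the function $w$ solves the linear equation
\begin{equation*}
	\nu\omega_1^2\,w_{\eta\eta}-\eta A\,w_\xi+(\eta^2-1)B\,w_\eta+\bigl[\nu(\omega_1+\omega_2)\omega_{2\eta\eta}-\eta C\bigr]w=0,
\end{equation*}
which is forward parabolic in $\xi$ since $\eta A\ge0$, with the Robin condition $\nu\omega_1 w_\eta=(v_1-\nu\omega_{2\eta})w$ at $\eta=0$. The coefficient $v_1-\nu\omega_{2\eta}$ is positive for small $X$ because $\omega_{2\eta}<0$ and $v_1(0)=0$, so a positive boundary maximum at $\eta=0$ would force $w_\eta>0$, contradicting Hopf's lemma; and the zeroth-order coefficient is negative throughout $\varOmega$, whence the classical maximum principle forces $w\equiv0$.

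I expect the main obstacle to be the degenerate edge $\eta=1$, where (\ref{qy}) loses parabolicity: there the limit is only continuous, so the compactness and regularity arguments must be confined to $\{\eta\le1-\delta\}$ and the precise weight $\sigma(\eta)$ and the sharp bound $M_5(1-\eta)\sigma\le Y\le M_6(1-\eta)\sigma$ must be used to control the behavior as $\delta\to0$. A secondary difficulty is establishing the negativity of the zeroth-order coefficient $\nu(\omega_1+\omega_2)\omega_{2\eta\eta}-\eta C$ when $0<m<\tfrac13$, where $\eta C$ has an unfavorable sign: the crude bound $\nu YY_{\eta\eta}\le-M_{10}$ does not suffice, and one must instead exploit the exact identity $\nu YY_{\eta\eta}=(1-\eta^2)ma\,Y_\eta/Y+\eta\frac{3m-1}{2}a$ from (\ref{begin}), together with the $O(\xi)$ closeness of $\omega_i,\omega_{2\eta\eta}$ to $Y,Y_{\eta\eta}$, to see the cancellation that renders the coefficient negative.
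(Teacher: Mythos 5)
Your overall route---linear interpolation in $\xi$ of the profiles from Lemma \ref{wk}, compactness, passage to the limit, and a maximum-principle argument for uniqueness---is the same as the paper's, and your uniqueness argument essentially coincides with it: the same linearized equation for $w=\omega_1-\omega_2$, an equivalent form of the Robin condition at $\eta=0$ (your $\nu\omega_1 w_\eta=(v_1-\nu\omega_{2\eta})w$ versus the paper's $\nu w_\eta-\tfrac{B}{\omega_1\omega_2}w=0$, both with the favorable sign), and, crucially, the same cancellation of the $\eta\tfrac{3m-1}{2}a$ terms that makes the zeroth-order coefficient negative when $0<m<\tfrac13$; you correctly identified that this is the delicate point.

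The gap is in your identification of the limit as a solution. You invoke uniform-in-$h$ ``bounds on all higher $\eta$-derivatives coming from differentiating the ordinary differential equation (\ref{line})'', and you use the resulting uniform convergence of $\omega_{h\eta\eta}$ to solve (\ref{line}) algebraically for the difference quotient and identify its limit with $\omega_\xi$. But differentiating (\ref{line}) in $\eta$ produces the mixed difference quotient $(\omega^k_\eta-\omega^{k-1}_\eta)/h$, which Lemma \ref{wk} does not control uniformly in $h$: for $m\geq\tfrac13$ the sandwich bounds $Y_\eta(1+M_{14}kh)\leq\omega^k_\eta\leq Y_\eta(1-M_{15}kh)$ only give $|\omega^k_\eta-\omega^{k-1}_\eta|=O(kh)\,|Y_\eta|$, so the quotient is of order $k\,|Y_\eta|$, which blows up as $h\to0$ at fixed $\xi=kh$; for $0<m<\tfrac13$ the available bounds $-M_{16}\sigma\leq\omega^k_\eta\leq-M_{17}\sigma$ give no smallness at all. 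Hence there is no uniform bound on $\omega^k_{\eta\eta\eta}$, no equicontinuity of $\omega_{h\eta\eta}$ in $\xi$, and no uniform convergence of second derivatives; Arzel\`a--Ascoli legitimately yields uniform convergence only of $\omega_h$ itself (and, via an interpolation inequality of Landau type, of $\omega_{h\eta}$), while $\omega_{h\eta\eta}$ and the difference quotients converge only weakly. This is precisely where the paper takes a weaker route: it uses only the uniform boundedness from (\ref{ine}) to extract weak $L_2(\varOmega_{\varepsilon})$ limits of $\omega_{h\eta}$, $\omega_{h\eta\eta}$ and of the difference quotients, passes to the limit in (\ref{line}) in integral form against test functions (products of uniformly convergent and weakly convergent factors), concludes that (\ref{qy}) holds almost everywhere, recovers the H\"older regularity of the relevant derivatives a posteriori from the nondegenerate limiting equation, and verifies the $\eta=0$ condition in (\ref{bd3}) through the auxiliary function $\beta_h=\nu\bar\omega_{h\eta}-\bar v_1+\bar B/\bar\omega_h$ and the bound $|\beta_h|\leq C_8\eta$. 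Your proof becomes correct once the ``algebraic'' identification is replaced by such a weak-convergence (distributional) argument; as written, that step fails.
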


    	\begin{proof}
    	
    	The functions $\omega^k(\eta) \equiv \omega(kh, \eta)$ defined as solutions of system (\ref{line}), (\ref{bd6}) can be extended in $\xi$ for $kh<\xi \leq(k+1) h$ as linear functions. Thus we obtain a family $\omega_h(\xi, \eta)$ such that
    	\begin{equation}
    		\begin{aligned}
    			\omega_h(kh(1-\lambda)+(k+1)h\lambda,\eta)&=\omega^k(\eta)(1-\lambda)+\omega^{k+1}(\eta)\lambda,\\
    			0\leq\lambda\leq 1,\quad k&=0,1,2,\cdots.
    		\end{aligned}
    	\end{equation}
    	
    	According to Lemma \ref{wk}, the functions $\omega_h(\xi, \eta)$ from this family satisfy the Lipschitz condition with respect to $\xi$ and, for $0 \leq \xi \leq X$, $0 \leq \eta \leq 1-\varepsilon, 0<\varepsilon<1$, have uniformly (in $h$ ) bounded first derivative in $\eta$. By the Arzel\`{a} theorem, there is a sequence $h_i \rightarrow 0$ such that $\omega_{h_i}$ uniformly converge, on the rectangle $\{0 \leq x \leq X, 0 \leq \eta \leq 1-\varepsilon\}$, to some $\omega(\xi, \eta)$. Since $Y(1-M_{11}kh) \leq \omega_h \leq Y(1+M_{12}kh)$, the sequence $\omega_{h_i}$ can be chosen to be uniformly convergent to $\omega$ in $\varOmega$.
    	
    	It follows from (\ref{ine}) that $\omega(\xi, \eta)$ has bounded weak derivatives $\omega_{\xi}, \omega_\eta, \omega_{\eta \eta}$ and that $\omega \omega_{\eta \eta}$ is bounded in $\Omega$. The sequence $\omega_{h_i}(\xi, \eta)$ may be assumed such that the derivatives $\omega_{\xi}, \omega_\eta, \omega_{\eta \eta}$ in the domain
    	$$
    	\varOmega_{\varepsilon}=\{0<x<X, 0<\eta<1-\varepsilon\}, \quad \varepsilon=\text { const }>0,
    	$$
    	coincide with weak limits in $L_2\left(\varOmega_{\varepsilon}\right)$ of the respective functions
    	$$
    	h_i^{-1}\left(\omega_{h_i}\left(\xi+h_i, \eta\right)-\omega_{h_i}(\xi, \eta)\right), \quad \omega_{h_i \eta}, \omega_{h_i \eta \eta} .
    	$$
    	
    	Let us show that equation (\ref{qy}) holds for $\omega(\xi, \eta)$ everywhere in $\varOmega$. By construction, the function $\omega_h^k(\eta)$ satisfies the equation (\ref{line}), i.e.,
    	\begin{equation}\label{wh}
    		\begin{aligned}
    			\nu (\omega^k_h)^2\omega^k_{h\eta\eta}-\eta (A^k+\mu_k h)\frac{\omega^k_h-\omega^{k-1}_h}{h}+(\eta^2-1)B^k\omega^k_{h\eta}-\eta C^k\omega^k_h=0,\\
    			k=1,2,\cdots, m(h), \quad m(h)=[X / h].
    		\end{aligned}
    	\end{equation}
    	
    	Let $\varphi(\xi, \eta)$ be an infinitely differentiable function with compact support strictly inside $\varOmega$. Multiplying (\ref{wh}) by $h \varphi^k(\eta) \equiv h \varphi(k h, \eta)$, integrating the resulting equation in $\eta$ from $0$ to $1$ , and taking the sum over $k$ from $1$ to $m(h)$, we obtain
    	\begin{equation}\label{int}
    		\begin{gathered}
    			\sum_{k=1}^{m(h)} h \int_0^1 \left[\nu (\omega^k_h)^2\omega^k_{h\eta\eta}\varphi^k-\eta (A^k+\mu_k h)\left(\frac{\Delta\omega_h}{h}\right)^k\varphi^k+(\eta^2-1)B^k\omega^k_{h\eta}\varphi^k-\eta C^k\omega^k_h\varphi^k\right]d\eta=0,\\
    			\quad \left(\frac{\Delta\omega_h}{h}\right)^k=\frac{\omega^k_h-\omega^{k-1}_h}{h}.
    		\end{gathered}
    	\end{equation}
    	
    	Denote by $\bar{f}(\xi, \eta)$ a function defined in $\varOmega$ and equal to $f^k(\eta)=f(k h, \eta)$ for $(k-1) h<\xi \leq k h, k=1,2, \cdots, m(h)$. Using this notation, we can rewrite (\ref{int}) in the form
    	\begin{equation}\label{bar}
    		\int_{\varOmega}\left[\nu (\overline{\omega}_h)^2\overline{\omega}_{h\eta\eta}\overline{\varphi}-\eta \overline{A\varphi}\overline{\left(\frac{\Delta\omega_h}{h}\right)}
    		+(\eta^2-1)\overline{B\varphi}\overline{\omega}_{h\eta}-\eta \overline{C\varphi}\overline{\omega}_h\right]d\xi d\eta=0.
    	\end{equation}

    	The functions $\overline{\omega}, \overline{A\varphi}, \overline{B\varphi}, \overline{C\varphi}$ are uniformly convergent to $\omega, A \varphi, B \varphi, C\varphi$, respectively, as $h_i \rightarrow 0$.
    	
    	For any $\varepsilon\in(0,1)$, the functions $\overline{\left(\frac{\Delta\omega_h}{h}\right)}, \overline{\omega}_{h\eta}, \overline{\omega}_{h\eta\eta}$ are weakly convergent in $L_2(\varOmega_{\varepsilon})$ to $\omega_{\xi}, \omega_{\eta}, \omega_{\eta \eta}$, respectively, as $h_i \rightarrow 0$. Passing to the limit as $h_i \rightarrow 0$ in (\ref{bar}), we find that
    	\begin{equation}
    		\int_{\varOmega}\left(\nu\omega^2\omega_{\eta \eta}-\eta A\omega_{\xi}+(\eta^2-1)B\omega_{\eta}-\eta C\omega\right)\varphi d\xi d\eta =0,
    	\end{equation}
    	which implies, owing to our choice of $\varphi$, that equation (\ref{qy}) holds almost everywhere for $\omega(\xi,\eta)$ in $\varOmega$.
    	
    	It follows from (\ref{qy}) that the function $\omega(\xi,\eta)$ and its derivatives in (\ref{qy}) satisfy the H\"{o}lder condition in any domain strictly interior to $\varOmega$, since the Lipschitz condition holds for $\omega$ and $\omega>0$.
    	
    	Let us show that the boundary conditions (\ref{bd3}) are also satisfied. The validity of the first condition follows from the uniform convergence of $\omega_h$ and (\ref{bd6}) for $\eta=1$. Set 
    	\begin{equation}
    		\beta_h(\xi,\eta)=\nu \overline{\omega}_{h\eta}-\bar{v_1}+\frac{\bar{B}}{\bar{\omega}_h}.
    	\end{equation}
    	Since $\bar{\omega}_{h\eta\eta}$, for $0\leq \eta<1-\delta, \delta=\text{const}>0$, is bounded uniformly in $h$ and the second boundary condition in (\ref{bd6}) is satisfied, we have 
    	\begin{equation}
    		\beta_h(\xi,0)=0,\quad |\beta_h(\xi,\eta)|\leq C_8\eta,\quad 0\leq\eta<1-\delta.
    	\end{equation}
    	
    	The function $\beta=\nu \omega_{\eta}-v_1+\frac{B}{\omega}$ is a weak limit in $L_2(\varOmega)$ of $\beta_h$ as $h_i\rightarrow 0$. Therefore, $|\beta(\xi,\eta)|\leq C_8\eta$, which means that the boundary condition (\ref{bd3}) is satisfied.
    	
    	Next, let us prove the uniqueness of the solution. Assume that problem (\ref{qy}), (\ref{bd3}) in $\varOmega$ admits two solutions, say $\omega_1,\omega_2$, with the above properties. Set $\bar{\omega}=\omega_1-\omega_2$. Then
    	\begin{equation}
    		\begin{gathered}
    			\nu \omega^2_1\bar{\omega}_{\eta\eta}-\eta A\bar{\omega}_{\xi}+(\eta^2-1)B\bar{\omega}_{\eta}-\eta C\bar{\omega}+\nu (\omega_1+\omega_2)\omega_{2\eta\eta}\bar{\omega}=0 \quad \text{in}\quad \varOmega,\\
    			\bar{\omega}(1)=0,\quad \left.\left(\nu \bar{\omega}_{\eta}-\frac{B}{\omega_1\omega_2}\bar{\omega}\right)\right|_{\eta=0}=0.
    		\end{gathered}	
    	\end{equation}
    	We have 
    	\begin{equation}
    		\begin{aligned}
    			&-\eta C+\nu (\omega_1+\omega_2)\omega_{2\eta\eta}\\
    			\leq &-\eta \frac{3m-1}{2}a+\eta\frac{3m-1}{2}a+(1-\eta^2)ma\frac{\omega^k_{\eta}}{\omega^k}+N_7kh<0
    		\end{aligned}
    	\end{equation}
    	for $0< \eta< 1$ when $0<m<\frac{1}{3}$, since
    	\begin{equation}
    		\nu \omega^k_2\omega^k_{2\eta\eta}\leq \eta\frac{3m-1}{2}a+(1-\eta^2)ma\frac{\omega^k_{\eta}}{\omega^k}+N_5 kh.
    	\end{equation}
        And we have
        \begin{equation}
        	-\eta C+\nu (\omega_1+\omega_2)\omega_{2\eta\eta}<0
        \end{equation}
        for $0< \eta< 1$ as $m\geq\frac{1}{3}$.
    	
    	Therefore, $\bar{\omega}$ can have neither a positive maximum nor a negative minimum. Consequently, $\bar{\omega}\equiv 0$, $\omega_1\equiv \omega_2$.

    \end{proof}
    
    As an immediate corollary, we obtain Theorem \ref{prandtl} by making the inverse change of variables (\ref{crocco}).
    
    \begin{proof}{Proof of Theorem \ref{prandtl}}
    	
    	According to (\ref{crocco}), we have
    	\begin{equation}
    		x=\xi,\quad y=\int_{0}^{u/U}\frac{1}{x^{\frac{m-1}{2}}\omega(x,s)}ds.
    	\end{equation}
    	Hence, by virtue of the continuity of $\omega(\xi, \eta)$ in $\bar{\varOmega}$ and the inequality $\omega>0$ for $0 \leq \eta<1$, we see that $u(x, y) / U(x)$ is continuous in $\bar{D}$,
    	\begin{equation}
    		u(x, 0)=0, \quad u(0, y)=0, \quad u(x, y) \rightarrow U(x) \text { as } y \rightarrow \infty,
    	\end{equation}
    	$u_y$ is bounded and continuous in $\bar{D}$, $u_y>0$ for $y \geq 0, x \geq 0$. And we find that
    	\begin{equation}
    		\begin{aligned}\label{rel}
    			&u_{yy}=x^{\frac{m-1}{2}}\omega_{\eta}u_y,\quad u_{yyy}=x^{\frac{m-1}{2}}(\omega_{\eta\eta}\frac{u^2_y}{U}+\omega_{\eta}u_{yy}),\\
    			&u_{yx}=(x^{\frac{m-1}{2}}U)_x\omega+x^{\frac{m-1}{2}}U(\omega_{\xi}+\omega_{\eta}(\frac{u_x}{U}-\frac{uU_x}{U^2})),\\
    			&u_x=u\frac{U_x}{U}+\omega U\int_{0}^{u/U}\left(\frac{\omega_{\xi}(x,s)}{\omega^2(x,s)}+\frac{m-1}{2}\frac{1}{x\omega(x,s)}\right)ds.
    		\end{aligned}
    	\end{equation}
    	
    	From the properties of $\omega$ and its derivatives, in combination with (\ref{rel}), it follows that the generalized derivatives $u_x, u_{y y}, u_{y y y}$ are bounded in $D$, and $u_{x y}$ is bounded for finite $y$. The first inequality for $u$ follows from the estimates for $\omega$. The continuity of $u_x$ and $u_{y y}$ with respect to $y$ follows from (\ref{rel}). Let us define $v(x, y)$ by
    	\begin{equation}
    		v=\frac{-u u_x+\nu u_{y y}+U U_x}{u_y}.
    	\end{equation}
    	
    	The function $v$ has the first derivative with respect to $y$ in $D$, we obtain the equations
    	\begin{equation}\label{u1}
    		v_y u_y+u_y u_x+\frac{u_{y y}}{u_y}(-u u_x+U U_x+\nu u_{y y})+u u_{x y}-\nu u_{y y y}=0.
    	\end{equation}
    	
    	The function $w(\xi, \eta)=\frac{u_y}{x^{\frac{m-1}{2}}U}$ satisfies equation (\ref{qy}). Replacing the derivatives of $\omega$ by those of $u$ in (\ref{qy}), we find that
    	\begin{equation}\label{u2}
    		\frac{1}{x^{\frac{3(m-1)}{2}}U}\left\{-uu_{xy}+uu_x\frac{u_{yy}}{u_y}-UU_x\frac{u_{yy}}{u_y}+\nu\frac{u_yu_{yyy}-u^2_{yy}}{u_y}\right\}=0.
    	\end{equation}
    	
    	Multiplying (\ref{u2}) by $x^{\frac{3(m-1)}{2}}U$ and adding the result to (\ref{u1}), we obtain
    	\begin{equation}
    		v_y u_y+u_y u_x=0,
    	\end{equation}
    	or equivalently,
    	\begin{equation}
    		u_x+v_y=0.
    	\end{equation}
    	
    	Let us show that $v(x, y)$ satisfies the condition
    	\begin{equation}
    		v(x, 0)=v_0(x)=x^{\frac{m-1}{2}}v_1(x).
    	\end{equation}
    	It follows from (\ref{bd3}) that
    	\begin{equation}
    		v_1=\left.\left(\frac{\nu \omega \omega_{\eta}+B}{\omega}\right)\right|_{\eta=0}.
    	\end{equation}
    	And we find that
    	\begin{equation}
    		v(x,0)=\left.\left[\frac{\nu u_{y y}+U U_x}{u_y}\right]\right|_{y=0}=x^{\frac{m-1}{2}}\left.\left[\frac{\nu \omega\omega_{\eta}+ B}{\omega}\right]\right|_{\eta=0}=v_0(x).
    	\end{equation}
    	It follows that the function $v$ is continuous in $\bar{D}$ with respect to $y$, and is bounded for bounded $y$, $v_y$ is bounded in $D$.
    	
    	Using the estimates for $\omega(\xi,\eta)$ established in Theorem \ref{nonline} and those for $Y(\eta)$ from Theorem \ref{Y}, we find that the following inequality is valid:
    	\begin{equation}
    		M_5(1-\eta)\sigma(1-M_{11}\xi) \leq \omega(\xi,\eta) \leq M_6(1-\eta)\sigma(1+M_{12}\xi).
    	\end{equation}
    	This brings us to the inequalities as $y\rightarrow \infty$
    	\begin{equation}
    		\frac{\sigma}{M_6x^{\frac{m-1}{2}}(1+M_{12}x)}\leq\frac{2(\sigma-\sigma(0))}{M_6x^{\frac{m-1}{2}}(1+M_{12}x)}\leq y\leq\frac{2(\sigma-\sigma(0))}{M_5x^{\frac{m-1}{2}}(1-M_{11}x)}\leq\frac{2\sigma}{M_5x^{\frac{m-1}{2}}(1-M_{11}x)}
    	\end{equation}
    	since
    	\begin{equation}
    		y=\int_{0}^{u/U}\frac{1}{x^{\frac{m-1}{2}}\omega(x,s)}ds.
    	\end{equation}
    	And we find that
    	\begin{equation}
    		\frac{1}{2}M_5x^{\frac{m-1}{2}}(1-M_{11}x)y \leq \sigma \leq M_6x^{\frac{m-1}{2}}(1+M_{12}x)y,
    	\end{equation}
    	and therefore
    	\begin{equation}
    		-M^2_6x^{m-1}y^2(1+M_{12}x)^2 \leq \ln \mu(1-\frac{u}{U}) \leq -\frac{1}{4}M^2_5x^{m-1}y^2(1-M_{11}x)^2.
    	\end{equation}
    	Hence we see that
    	\begin{equation}
    		M_1 \exp (-M_2 x^{m-1} y^2) \leq 1-\frac{u}{U} \leq M_3 \exp (-M_4 x^{m-1} y^2).
    	\end{equation}
    	
    	If $u, v$ is a solution of problem (\ref{ps}),(\ref{bd}) with the properties listed in Theorem \ref{prandtl}, then, changing the variables by (\ref{crocco}) and introducing the function $\omega=\frac{u_y}{x^{\frac{m-1}{2}}U}$, we arrive at a solution $\omega$ of problem (\ref{qy}), (\ref{bd3}) with the properties specified in Theorem \ref{nonline}. As shown above, the latter solution is unique.
    	
    \end{proof}

    \section{Axially symmetric stationary boundary layer}
    
    In this section, we consider the following Prandtl system for the axially symmetric three-dimensional incompressible stationary flow:
    \begin{equation}\label{ps2}
    	\begin{gathered}
    		u\frac{\partial u}{\partial x}+v\frac{\partial u}{\partial y}=U\frac{dU}{dx}+\nu\frac{\partial^2 u}{\partial y^2};\\
    		\frac{\partial (ru)}{\partial x}+\frac{\partial (rv)}{\partial y}=0
    	\end{gathered}
    \end{equation}
    in the domain $D=\{0<x<X,0<y<\infty\}$ with the boundary conditions 
    \begin{equation}\label{boundary}
    	\begin{gathered}
    		u(0,y)=0,u(x,0)=0,v(x,0)=v_0(x),\\
    		u\rightarrow U(x) \quad \text{as} \quad y\rightarrow \infty.
    	\end{gathered}
    \end{equation}
    
    The function $r(x)$ determines the surface of the body past which the fluid flows: $r(0)=0$, $r(x)>0$ for $x>0$, $r_x(0)\neq 0$. $U(x)$ is a given longitudinal velocity component of the outer flow: $U(0)=0$, $U(x)>0$ for $x>0$, $\nu$ is the viscosity coefficient.
    
    For instance, when three-dimensional axially symmetric flow flows to a cone with the cone angle $0<\phi<2\pi$, the system mentioned above appears near the surface of the cone (cf.Figure \ref{fig:axially}). It is known that the Euler flow near the surface of the cone possesses the following property. The flow velocity varies according to the law $U(x)\sim Cx^m$, where $C,m=\text{const}>0$. When$\phi=\pi$, $m=1$ (for example, the three-dimensional flow against a plane wall). When $0<\phi<\pi$, $0<m<1$ and when $\pi<\phi<2\pi$, $m>1$. Perhaps, due to the curvature of the surface, lower-order terms appear in this asymptotics.
    
    \begin{figure}[h]
    	\centering
    	\includegraphics[scale=0.6]{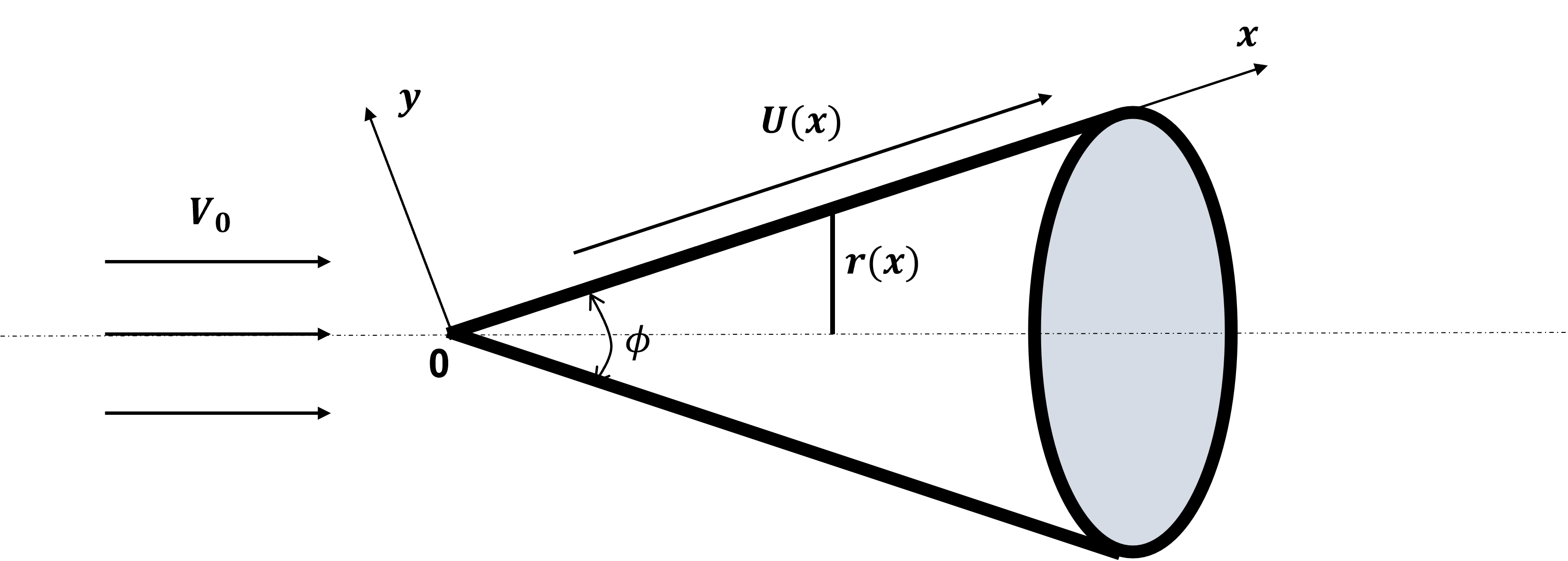}
    	\caption{Flow parallel to the symmetry axis of a cone}
    	\label{fig:axially}
    \end{figure}
    
    Oleinik and Samokhin considered the boundary layer problem near a critical point on the wall in the case of an axially symmetric flow in \cite{OS}. The main results of them can be summarized as that, when $\pi\leq\phi<2\pi$, there exists a unique classical solution to the problem (\ref{ps2}), (\ref{boundary}) for $x<X$, where $X$ is small, in the class of functions having a certain behavior at infinity with respect to $y$, namely, $U(x)-u(x,y)\sim e^{-\alpha y^2}$ as $y\rightarrow \infty$, where $\alpha=\text{const}>0$. The main purpose of this section is to establish the existence, uniqueness, and asymptotic behavior of solutions to the system (\ref{ps}), (\ref{boundary}) for arbitrary $0<\phi<2\pi$. This generalizes the local well-posedness results due to Oleinik mentioned above.
    
    The axially symmetric stationary boundary layer problem can also be solved by the line method on the basis of the Crocco transformation. The approach of the preceding sections can be applied with no principal modifications. For this reason, we restrict ourselves to the statement of the problem and the main result.
   
    In what follow, $M_i,N_i$ stand for positive constants.
    
    We consider problem (\ref{ps2}),(\ref{boundary}) under the assumption that
    \begin{equation}\label{assum2}
    	\begin{gathered}
    		U(x)=x^mV(x),\quad v_0(x)=x^{\frac{m-1}{2}}v_1(x),\quad r(x)=xr_1(x),\\
    		V(x)=a+a_1(x), a>0,\quad v_1(0)=0,\quad r_1(x)=c+c_1(x), 0<c\leq 1,\\
    		|a_1(x)|\leq N_1x,\quad |v_1(x)|\leq N_2x,\quad c_1(x)\leq N_3x.
    	\end{gathered} 
    \end{equation}
    
    Introducing a generalization of the Crocco variables
    \begin{equation}
    	\xi=x,\quad \eta=\frac{u(x,y)}{U(x)},
    \end{equation}
    we obtain for $\omega(\xi ,\eta)=\frac{u_y(x,y)}{x^{\frac{m-1}{2}}U(x)}$ the following equation:
    \begin{equation}\label{qy2}
    	\nu \omega^2\omega_{\eta\eta}-\eta A\omega_{\xi}+(\eta^2-1)B\omega_{\eta}-\eta C\omega=0,
    \end{equation}
    in the domain $\varOmega=\{0<\xi<X,0<\eta<1\}$, with the boundary conditions
    \begin{equation}\label{boundary2}
    	\left.\omega\right|_{\eta=1}=0,\quad \left.\left(\nu \omega\omega_{\eta}-v_1\omega +B\right)\right|_{\eta=0}=0,
    \end{equation}
    where 
    \begin{equation}
    	A=\xi V(\xi),\quad B=mV(\xi)+\xi V_{\xi}(\xi),\quad C=\frac{3(m-1)}{2}V(\xi)+\xi V_{\xi}(\xi)-\frac{\xi r_{1\xi}}{r_1}V(\xi).
    \end{equation}
    
    When $\xi\rightarrow$, let $Y(\eta)=\omega(0,\eta)$, the equation (\ref{qy2}) degenerates to the following elliptic equation:
    \begin{equation}\label{begin2}
    	\nu Y^2Y_{\eta\eta}+(\eta^2-1)maY_{\eta}-\eta \frac{3(m-1)}{2}aY=0,\quad 0<\eta <1,
    \end{equation} 
    and the boundary conditions become
    \begin{equation}\label{boundary3}
    	Y(1)=0,\quad (\nu YY_{\eta}+ma)|_{\eta=0}=0.
    \end{equation}
    
    As mentioned in Section II, what we care about is the following self-similarity results.
    
    Consider $U(x)=ax^m,a>0$, $v_0(x)=0$, $r(x)=cx$, $u(x,y)$ has the following self-similarity form:
    \begin{equation}
    	u(x,y)=x^mf^{'}(yx^\frac{m-1}{2}),
    \end{equation}
    and $f(z)$ satisfies the following equation
    \begin{equation}\label{si2}
    	f^{'''}+\frac{m+3}{2}aff^{''}+ma(1-f^{'2})=0,
    \end{equation}
    with the boundary conditions
    \begin{equation}
    	f(0)=0, \quad f^{'}(0)=0,\quad f^{'}(z)\rightarrow 1 \quad \text{as} \quad z\rightarrow\infty.
    \end{equation}
    Furthermore, introduce $\eta=f^{'}(z)$, $Y(\eta)=f^{''}(z)$, we can obtain the equation (\ref{begin2}), (\ref{boundary3}).
    
    Obviously, we can obtain the existence, uniqueness and asymptotic behavior of solutions for equation (\ref{si2}) similar to Theorem \ref{exi} and Theorem \ref{asy}. Furthermore, we can establish the following result.
    
    \begin{theorem}\label{Y2}
    	Problem (\ref{begin2}), (\ref{boundary3}) has one and only one solution with the following properties:
    	\begin{equation}
    		\begin{gathered}
    			M_1(1-\eta)\sigma \leq Y(\eta) \leq M_2(1-\eta)\sigma,\\
    			M_2(1-\eta)(\sigma-K)\leq Y,\quad\text{for} \quad 0<\eta_0\leq\eta<1, \\
    			-M_3\sigma \leq Y_{\eta} \leq -M_4\sigma,\\
    			-M_5\leq  YY_{\eta\eta} \leq -M_6,
    		\end{gathered}
    	\end{equation}
    	where $\sigma(\eta)=\sqrt{-\ln \mu (1-\eta)}, \mu=\text{const}, 0<\mu<1$, $\sigma>K$ for $\eta>\eta_0>0$.
    \end{theorem}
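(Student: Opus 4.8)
The plan is to transcribe the proof of Theorem \ref{Y} essentially verbatim, the only substantive change being that the planar structural constants $\tfrac{m+1}{2}$ and $\tfrac{3m-1}{2}$ are replaced by their cone analogues $\tfrac{m+3}{2}$ and $\tfrac{3(m-1)}{2}$. First I would reduce the self-similar equation (\ref{si2}) to the Falkner--Skan form (\ref{fs}) by the scaling $z=\lambda\zeta,\ f(z)=\lambda g(\zeta)$ with $\lambda^2=\tfrac{2}{(m+3)a}$, which leaves $f'=g'$ invariant and turns (\ref{si2}) into (\ref{fs}) for $g$ with
\begin{equation}
    \beta=ma\lambda^2=\frac{2m}{m+3}\in(0,2).
\end{equation}
Since $\beta>0$, Theorems \ref{exi} and \ref{asy} apply with $f_0=f_1=0$ and furnish a unique profile $f$ satisfying $f(0)=0$, $0<f'<1$, $f''>0$, $f'''<0$, and the Gaussian decay $1-f'(z)\sim C_3\exp(-C_4z^2)$, $f''\sim z(1-f')$ as $z\to\infty$ (the algebraic prefactor $z^{-1-2\beta}$ being absorbed into the exponential). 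Passing to $\eta=f'(z)$, $Y(\eta)=f''(z)$, a diffeomorphism of $[0,\infty)$ onto $[0,1)$ because $f''>0$, converts $f$ into a solution of (\ref{begin2}), (\ref{boundary3}); conversely every positive solution $Y$ reconstructs such an $f$, so the uniqueness of $Y$ is inherited from the uniqueness in Theorem \ref{exi}.

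The two-sided bound on $Y$ is read off the asymptotics: from $1-\eta=1-f'\sim C_3\exp(-C_4z^2)$ one gets $z\sim C_4^{-1/2}\sigma(\eta)$ for a suitable choice of $\mu$, whence $Y=f''\sim z(1-\eta)\sim C_5(1-\eta)\sigma$, which after fixing $M_1,M_2,K$ yields the first two inequalities. For the gradient bound I would use the equivalence, established as in \cite{G}, of (\ref{begin2}) with the first-order integral identity
\begin{equation}
    \nu Y_\eta(\eta)=-ma\,\frac{1-\eta^2}{Y(\eta)}-\frac{m+3}{2}a\int_0^\eta\frac{s}{Y(s)}\,ds,
\end{equation}
which is consistent with the boundary relation $\nu Y(0)Y_\eta(0)=-ma$. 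Substituting $M_1(1-\eta)\sigma\le Y\le M_2(1-\eta)\sigma$, using $\tfrac{1-\eta^2}{1-\eta}=1+\eta$ and the elementary bound $\int_0^\eta\tfrac{s}{(1-s)\sigma(s)}\,ds=O(\sigma)$, produces $-M_3\sigma\le Y_\eta\le -M_4\sigma$ exactly as in the planar computation.

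For the curvature quantity I rewrite (\ref{begin2}) as
\begin{equation}
    \nu YY_{\eta\eta}=(1-\eta^2)ma\,\frac{Y_\eta}{Y}+\eta\,\frac{3(m-1)}{2}a,
\end{equation}
so $|YY_{\eta\eta}|\le M_5$ is immediate from the already-obtained bounds. The sign-definite estimate $YY_{\eta\eta}\le -M_6$ splits at $m=1$, the value where the zeroth-order coefficient $\tfrac{3(m-1)}{2}$ changes sign (replacing the value $m=\tfrac13$ of the planar problem). For $0<m\le1$ both terms on the right are nonpositive and the bound $\nu YY_{\eta\eta}\le-(1+\eta)ma\frac{M_4}{M_2}+\eta\frac{3(m-1)}{2}a\le-M_6$ is direct. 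For $m>1$ I set $R=YY_{\eta\eta}$, differentiate (\ref{begin2}) in $\eta$ to obtain a first-order equation $Q(R)=\cdots$ of the same type as in Theorem \ref{Y}, verify $Q(R+M_6)>0$ for $M_6$ small, and close the argument by the maximum principle: using the refined lower bound $Y\ge M_2(1-\eta)(\sigma-K)$ one extracts a sequence $\eta_n\to1$ with $R(\eta_n)<-M_6$, checks $R(0)<-M_6$ from the boundary condition, and concludes $R<-M_6$ on $[0,1)$.

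I expect the genuine obstacle to be this $m>1$ maximum-principle step: one must write out the coefficient $\nu Y_\eta+(\eta^2-1)\tfrac{ma}{Y}-\tfrac{\nu Y}{\eta}$ of $R$ with the cone constants and confirm it has the correct sign near both endpoints, and one must draw the boundary sequence $\eta_n\to1$ from the refined lower bound so that the comparison can be closed; everything else is a bookkeeping transcription of Theorem \ref{Y}. The one conceptual point worth emphasizing is that the limiting equation (\ref{begin2}) is independent of the geometric factor $c$ (the term $\tfrac{\xi r_{1\xi}}{r_1}$ vanishes as $\xi\to0$ since $r_1(0)=c\neq0$), and that the rescaled exponent $\beta=\tfrac{2m}{m+3}$ is strictly positive for every $m>0$, so the Falkner--Skan inputs of Theorems \ref{exi} and \ref{asy} are legitimately available over the whole range $0<m<\infty$, in particular on the new regime $0<m<1$ not covered by Oleinik and Samokhin.
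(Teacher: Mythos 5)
Your proposal coincides with the paper's own treatment: the paper gives no separate proof of Theorem \ref{Y2}, saying only that the approach of Theorem \ref{Y} "can be applied with no principal modifications," and your transcription is exactly that, carried out correctly --- the Falkner--Skan reduction with $\beta=\frac{2m}{m+3}\in(0,2)$ so Theorems \ref{exi} and \ref{asy} apply for all $m>0$, the integral identity with coefficient $\frac{m+3}{2}$ (which indeed reproduces equation (\ref{begin2}) upon differentiation and is consistent with the boundary condition (\ref{boundary3})), and the maximum-principle argument for $R=YY_{\eta\eta}$ in the supercritical range. Your identification of $m=1$ as the value where the sign of $\frac{3(m-1)}{2}$ splits the argument (replacing $m=\frac13$ of the planar case) is also confirmed by the paper's own statement of Theorem \ref{nonline2}, which draws its dichotomy at $m\geq 1$ versus $0<m<1$.
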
 	
    
    Similar to Section III, we obtain the following existence theorem for the problem (\ref{qy2}), (\ref{boundary2}) by the line method.
    
    \begin{theorem}\label{nonline2}
    	Assume that $U(x)$, $v_0(x)$, and $r(x)$ satisfy the conditions (\ref{assum2}). Then, in the domain $\varOmega$, with $X$ depending on $V(x)$, $v_1(x)$ and $r_1(x)$, problem (\ref{qy2}),(\ref{boundary2}) has a unique classical solutions $\omega(\xi,\eta)$ which is positive for $\eta<1$ and has the following properties:
    	\begin{equation}
    		\begin{gathered}
    			Y(\eta)(1-M_7\xi)\leq \omega \leq Y(\eta)(1+M_8\xi),\\
    			|\omega_{\xi}(\xi,\eta)|\leq M_9Y(\eta),\\
    			Y_{\eta}(1+M_{10}\xi)\leq \omega_{\eta} \leq Y_{\eta}(1-M_{11}\xi),\quad \text{for}\quad m\geq 1,\\
    			-M_{12} \sigma\leq \omega_{\eta} \leq -M_{13} \sigma,\quad \text{for}\quad 0<m<1,\\
    			-M_{14}\leq \omega \omega_{\eta\eta}\leq -M_{15},
    		\end{gathered}
    	\end{equation}
    	where $\sigma(\eta)=\sqrt{-\ln \mu(1-\eta)}$, $\mu=\text{const}$, $0<\mu<1$.
    \end{theorem}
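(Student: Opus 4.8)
The plan is to transplant the line-method argument of Section III, the only structural changes being that the coefficient $C$ now carries the extra curvature term $-\frac{\xi r_{1\xi}}{r_1}V$ and that its leading part is $\frac{3(m-1)}{2}a$ rather than $\frac{3m-1}{2}a$, which shifts the critical exponent from $m=\frac13$ to $m=1$. First I would discretize in $\xi$, setting $\omega^k\equiv\omega(kh,\eta)$ and replacing $\omega_\xi$ by the backward quotient $(\omega^k-\omega^{k-1})/h$ with the stabilizing constants $\mu_k$ inserted as in (\ref{line}), to obtain
\begin{equation}
\nu(\omega^k)^2\omega^k_{\eta\eta}-\eta(A^k+\mu_k h)\frac{\omega^k-\omega^{k-1}}{h}+(\eta^2-1)B^k\omega^k_\eta-\eta C^k\omega^k=0,
\end{equation}
with $\omega^k(1)=0$ and $(\nu\omega^k\omega^k_\eta-v_1^k\omega^k+B^k)|_{\eta=0}=0$; the $k=0$ instance is exactly (\ref{begin2}),(\ref{boundary3}), controlled by Theorem \ref{Y2}. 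A first lemma establishes, for each $k$, a smooth solution obeying the crude bound $K_1(1-\eta)\leq\omega^k\leq K_2(1-\eta)\sigma$: one regularizes the degeneracy by adding $\varepsilon$ to $\nu(\omega^k)^2$, truncates the nonlinear boundary term by a cutoff $\psi$, and applies Theorem \ref{LS} along a homotopy $\gamma\in[0,1]$, the uniform a priori bound required by its hypothesis~5) coming from comparison with the barriers $H^k=K_1(1-\eta)$ and $H_1=K_2(1-\eta)\sigma$ exactly as in Section III.

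The substantive step is the sharp inductive estimate, the analog of Lemma \ref{wk}. Assuming the bounds hold at step $k-1$, I would propagate $Y(1-M_7kh)\leq\omega^k\leq Y(1+M_8kh)$ and $|(\omega^k-\omega^{k-1})/h|\leq M_9Y$ by applying the maximum principle to the differences $\omega^k-Y(1\mp Mkh)$ and to $\phi^k_\pm=\varphi^k\pm r^k$ with $r^k=(\omega^k-\omega^{k-1})/h$, then differentiate in $\eta$ to control $z^k=\omega^k_\eta$ and finally read off $-M_{14}\leq\omega^k\omega^k_{\eta\eta}\leq-M_{15}$ from the equation itself. It is here that the threshold $m=1$ appears: for $0<m<1$ the sign of $\eta C^k$ yields the direct bound $-M_{12}\sigma\leq\omega^k_\eta\leq-M_{13}\sigma$, whereas for $m\geq1$ one recovers the comparison bounds $Y_\eta(1+M_{10}kh)\leq\omega^k_\eta\leq Y_\eta(1-M_{11}kh)$ after taking $\mu_k$ sufficiently large, precisely as in the passage from $m<\frac13$ to $m\geq\frac13$ in Lemma \ref{wk}.

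The main obstacle will be verifying the sign of the zeroth-order coefficient in each comparison inequality, since the curvature contribution $-\frac{\xi r_{1\xi}}{r_1}V$ now sits inside $C^k$. Under the hypothesis $r_1=c+c_1$ with $c>0$ and $|c_1|\leq N_3x$, one has $r_{1\xi}=c_{1\xi}$ bounded and $r_1$ bounded below, so $\frac{\xi r_{1\xi}}{r_1}V=O(x)$ is genuinely lower order and can be absorbed into the $O(kh)$ remainders in exactly the places where $a_1^k$ and $a_{1\xi}^k$ were absorbed, at the cost of shrinking $X$. Once the estimates are secured, passing to the limit is routine: the piecewise-linear-in-$\xi$ extensions $\omega_h$ are Lipschitz in $\xi$ with uniformly bounded $\omega_{h\eta}$ on $\{0\leq\eta\leq1-\varepsilon\}$, so Arzel\`a--Ascoli furnishes a subsequence converging to some $\omega(\xi,\eta)$; testing the discrete equation against a compactly supported $\varphi$ and using weak $L_2(\varOmega_\varepsilon)$ convergence of the difference quotients and $\eta$-derivatives shows $\omega$ solves (\ref{qy2}) almost everywhere, while (\ref{boundary2}) follows from uniform convergence at $\eta=1$ and the estimate $|\beta_h(\xi,\eta)|\leq C\eta$ at $\eta=0$. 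Uniqueness then follows from the same maximum-principle argument as in Theorem \ref{nonline}, the decisive sign condition $-\eta C+\nu(\omega_1+\omega_2)\omega_{2\eta\eta}<0$ holding for both ranges of $m$.
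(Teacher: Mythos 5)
Your proposal is correct and is essentially the paper's own approach: the paper gives no separate proof of Theorem \ref{nonline2}, stating only that the line-method argument of Section III ``can be applied with no principal modifications,'' and your sketch carries out exactly that transfer (discretization with the $\mu_k$ terms, $\varepsilon$-regularization plus cutoff plus Leray--Schauder, barrier/induction estimates, Arzel\`a--Ascoli and weak passage to the limit, maximum-principle uniqueness). In particular you correctly locate the only places where the cone geometry enters --- the leading part of $C$ becomes $\frac{3(m-1)}{2}a$, shifting the critical exponent from $m=\frac{1}{3}$ to $m=1$, and the curvature term $-\frac{\xi r_{1\xi}}{r_1}V=O(\xi)$ is absorbed into the $O(kh)$ remainders at the cost of shrinking $X$ --- which is all that the paper's assertion of ``no principal modifications'' amounts to.
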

     
     As an immediate corollary, we obtain the following existence theorem for the problem (\ref{ps2}), (\ref{boundary}).
    
    \begin{theorem}\label{prandtl2}
    	Let $U(x)$, $v_0(x)$, and $r(x)$ satisfy the conditions (\ref{assum2}). Then the initial boundary value problem (\ref{ps2}),(\ref{boundary}) in the domain $D=\{0<x<X,0<y<\infty\}$, for some $X$ depending on $U$, $v_0$ and $r(x)$, has a unique classical solution $u(x,y),v(x,y)$ with the following properties: $u_y>0$ for $y \geq 0, x>0$ ; $u/U, u_y /(x^{(m-1) / 2} U(x))$ are bounded and continuous in $\bar{D}$ ; $u>0$ for $y>0$ and $x>0 $; $ u(x, y) \rightarrow U(x)$, $u_y \rightarrow 0$ as  $y \rightarrow \infty$; moreover, the solution $u(x,y)$ has the following property as $x\rightarrow 0$:
    	\begin{equation}
    		u(x,y)\sim x^m f^{'}\left(yx^{\frac{m-1}{2}}\right),
    	\end{equation}
    	where $f$ satisfies equation (\ref{si2}); and the solution $u(x,y)$ has the following inequalities as $y\rightarrow \infty$:
    	\begin{equation}
    		M_{16} \exp (-M_{17} x^{m-1} y^2) \leq 1-\frac{u}{U} \leq M_{18} \exp (-M_{19} x^{m-1} y^2).
    	\end{equation}
    \end{theorem}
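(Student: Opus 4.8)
The plan is to mirror the proof of Theorem \ref{prandtl}, passing from the solution $\omega(\xi,\eta)$ of problem (\ref{qy2}), (\ref{boundary2}) supplied by Theorem \ref{nonline2} back to the physical variables through the inverse of the Crocco change. First I would recover $u$ by inverting $\eta=u/U$ via
\begin{equation*}
x=\xi,\qquad y=\int_{0}^{u/U}\frac{ds}{x^{\frac{m-1}{2}}\,\omega(x,s)}.
\end{equation*}
Since Theorem \ref{nonline2} gives $\omega>0$ continuous on $\bar{\varOmega}$ with $\omega|_{\eta=1}=0$, this defines $u(x,y)/U(x)$ as a continuous function on $\bar{D}$ satisfying $u(x,0)=0$, $u(0,y)=0$, $u\to U$ as $y\to\infty$, with $u_y=x^{\frac{m-1}{2}}U\,\omega>0$. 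The self-similar asymptotics $u\sim x^m f'(yx^{(m-1)/2})$ as $x\to 0$, with $f$ solving (\ref{si2}), is read off from the two-sided bound $Y(\eta)(1-M_7\xi)\le\omega\le Y(\eta)(1+M_8\xi)$, which forces $\omega(\xi,\eta)\to Y(\eta)=f''$ and hence $u\to x^m f'$.

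Next I would express $u_{yy},u_{yyy},u_{xy},u_x$ in terms of $\omega$ and its derivatives exactly as in (\ref{rel}); the estimates of Theorem \ref{nonline2} then show these generalized derivatives are bounded in $D$ (with $u_{xy}$ bounded for finite $y$), and the first inequality for $u$ follows from the bounds on $\omega$. I define $v$ through the momentum equation,
\begin{equation*}
v=\frac{-u u_x+\nu u_{yy}+U U_x}{u_y},
\end{equation*}
so that the first equation of (\ref{ps2}) holds automatically, and I verify the boundary value $v(x,0)=x^{\frac{m-1}{2}}v_1(x)=v_0(x)$ using the relation $v_1=\left.(\nu\omega\omega_\eta+B)/\omega\right|_{\eta=0}$ encoded in (\ref{boundary2}).

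The one genuinely new point, and the step I expect to be the main obstacle, is verifying the divergence-free condition in its axially symmetric form $\partial_x(ru)+\partial_y(rv)=0$, that is $u_x+v_y+\tfrac{r_x}{r}u=0$, rather than the planar $u_x+v_y=0$. As in the proof of Theorem \ref{prandtl}, differentiating the definition of $v$ in $y$ produces one identity, and rewriting (\ref{qy2}) in the physical variables produces a second; in the planar case these combine so that every term except $v_y u_y+u_y u_x$ cancels. Here the coefficient $C$ carries the extra piece $-\tfrac{\xi r_{1\xi}}{r_1}V$ together with the shift from $\tfrac{3m-1}{2}V$ to $\tfrac{3(m-1)}{2}V$; since the two versions of $C$ differ by $V\bigl(1+\tfrac{\xi r_{1\xi}}{r_1}\bigr)=\tfrac{\xi r_\xi}{r}V$, the $-\eta C\omega$ term in (\ref{qy2}) should contribute, after the inverse transform and multiplication by the weight $x^{3(m-1)/2}U$, precisely the additional $\tfrac{r_x}{r}u\,u_y$ absent in the planar computation. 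The delicate bookkeeping is to confirm that this single extra term survives the cancellation intact, leaving $v_y u_y+u_y u_x+\tfrac{r_x}{r}u\,u_y=0$, equivalently $\partial_x(ru)+\partial_y(rv)=0$.

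Finally, the decay as $y\to\infty$ follows as in the planar case: combining $M_1(1-\eta)\sigma\le Y\le M_2(1-\eta)\sigma$ from Theorem \ref{Y2} with Theorem \ref{nonline2} gives $M_1(1-\eta)\sigma(1-M_7\xi)\le\omega\le M_2(1-\eta)\sigma(1+M_8\xi)$, and inserting this into $y=\int_0^{u/U}(x^{(m-1)/2}\omega)^{-1}ds$ yields two-sided bounds relating $\sigma$ to $x^{(m-1)/2}y$, hence $M_{16}\exp(-M_{17}x^{m-1}y^2)\le 1-u/U\le M_{18}\exp(-M_{19}x^{m-1}y^2)$. Uniqueness is inherited from Theorem \ref{nonline2}: any solution $u,v$ of (\ref{ps2}), (\ref{boundary}) with the stated properties yields, under the Crocco change, a solution $\omega=u_y/(x^{(m-1)/2}U)$ of (\ref{qy2}), (\ref{boundary2}) with the properties listed there, and that solution is unique.
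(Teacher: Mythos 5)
Your proposal is correct and takes essentially the same route as the paper: the paper states Theorem \ref{prandtl2} as an immediate corollary of Theorem \ref{nonline2} via the inverse Crocco change of variables, exactly mirroring the proof of Theorem \ref{prandtl}, remarking only that the approach of the preceding sections applies ``with no principal modifications.'' Your computation that the modified coefficient $C$ (differing from the planar one by $\frac{\xi r_\xi}{r}V$) is precisely what produces the extra term $\frac{r_x}{r}u\,u_y$ needed for $\partial_x(ru)+\partial_y(rv)=0$ is correct, and in fact supplies the one detail the paper leaves unwritten.
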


    \begin{remark}
    	From the theorem \ref{prandtl2}, we note that a point $x_0$, $0<x_0<X$ can be found as the initial location of the continuation problem for axially symmetric boundary layer, where $u(x_0,y)>0$ and $U(x_0)>0$. Consider the system:
    	\begin{equation}
    		\begin{gathered}
    			u\frac{\partial u}{\partial x}+v\frac{\partial u}{\partial y}=U\frac{dU}{dx}+\nu\frac{\partial^2 u}{\partial y^2};\\
    			\frac{\partial (ru)}{\partial x}+\frac{\partial (rv)}{\partial y}=0
    		\end{gathered}
    	\end{equation}
    	in the domain $D=\{0<x<X,0<y<\infty\}$ with the boundary conditions 
    	\begin{equation}
    		\begin{gathered}
    			u(0,y)=u_0(y),u(x,0)=0,v(x,0)=v_0(x),\\
    			u\rightarrow U(x) \quad \text{as} \quad y\rightarrow \infty.
    		\end{gathered}
    	\end{equation}
        
        In order to solve this continuation problem by the line method on the basis of the Crocco transformation, the approach of continuation of the boundary layer (see Oleinik and Samokhin \cite{OS}) can be applied with no principal modifications. Thus we can naturally obtain the existence of the Prandtl system (\ref{ps2}), (\ref{boundary}) for any $X>0$ in the case of favorable condition either $U_x\geq 0$ and $v_0(x)\leq 0$ or $U_x>0$, and $\frac{r_x}{r}\geq \frac{U_x}{U}$. 
    \end{remark}

    \begin{acknowledgement}
    	L. Zhang is supported by NSFC under grant No.12031012 and National Key R$\&$D Program of China, Grant No.2021YFA1000800. C. Zhao is supported by National Key R$\&$D Program of China, Grant No.2021YFA1000800.
    \end{acknowledgement}

\end{document}